\DeclareMathAlphabet{\mathpzc}{OT1}{pzc}{m}{it}
\newtheorem{thm}{Theorem}[section]
\newtheorem{cor}[thm]{Corollary}
\newtheorem{lem}[thm]{Lemma}
\newtheorem*{claim}{Claim}
\theoremstyle{definition}
\newtheorem{defn}[thm]{Definition}
\theoremstyle{remark}
\newtheorem*{rem}{Remark}
\newcommand{\GP }{(G, \mathcal P)}
\newcommand{\pX}{\partial{X}}
\newcommand{\diam }[1]{{\textbf{diam}\big(#1\big)}}
\begin{document}

\title[Lower bound on the growth rate]{Lower bound on  growth of non-elementary subgroups in relatively hyperbolic groups}

\author{Yu-miao Cui}
\address{School of Mathematics, Hunan University,Changsha, Hunan, 410082, China}
\email{cuiyumiao@hnu.edu.cn}
\author{Yue-ping Jiang}
\address{School of Mathematics, Hunan University,Changsha, Hunan, 410082, China}
\email{ypjiang@hnu.edu.cn}

\author{Wen-yuan Yang}
\address{Beijing International Center for Mathematical Research (BICMR), Peking University, No. 5 Yiheyuan Road, Haidian District, Beijing, China}
\email{yabziz@gmail.com}

\thanks{Y-P. J. is supported by the National Natural Science Foundation of China (No. 11631010). W-Y. Y. is supported by the National Natural Science Foundation of China (No. 11771022).}


\subjclass[2000]{Primary 20F65, 20F67}

\date{\today}

\dedicatory{}

\keywords{Growth rate, Relatively hyperbolic groups, Uniform Exponential growth}

\begin{abstract}
This paper proves that in a non-elementary relatively hyperbolic group,  the logarithm  growth rate of any non-elementary subgroup has a  linear lower bound by the logarithm of the size of the corresponding generating set. As a consequence, any non-elementary subgroup has uniform exponential growth.   
\end{abstract}

\maketitle
\section{Introduction}
 
\subsection{Results and background} Let $S$ be a finite symmetric generating set of a group $H$ and $d_S$ the corresponding word metric.  Denote   $$\forall n\in \mathbb N\cup\{0\},\; S^{\le n}:=\{h\in H: d_S(1,h)\le n\}.$$
The (logarithm) \textit{growth rate} of $H$ with respect to $S$ is defined to the following limit $$\omega(H,S):=\lim_{n\to\infty}\frac{\log{\sharp (S^{\le n})}}{n}$$
which exists since $\sharp (S^{\le {n+m}})\le \sharp (S^{\le n})\cdot \sharp (S^{\le m})$. In what follows, we always consider finitely generated groups. 


The spectrum of growth rates of a group $H$ has attracted lots of research interests:
$$
\Omega(H):=\displaystyle  \left\{ {\omega(H, S)}: \sharp S<\infty,\; \langle S\rangle =H  \right \}.$$
For a group with exponential growth, the question of Gromov \cite{Gro81} whether $\Omega(H)$ admits the infimum $0$  was open for twenty years and  answered negatively by Wilson \cite{Wi04} (see \cite{Bar03} also). He constructed the first examples of groups with {non-uniform exponential growth} so that a sequence of two-element generating sets with growth rates tending to 0. 

A   group $H$ has   \textit{uniform exponential growth} if $\inf \Omega(H)>0$. If a group has uniform exponential growth, it is quite interesting to ask whether $\Omega(H)$ obtains the minimum. Sambusetti \cite{Sam2} showed that the answer was again negative for the free products of any two non-Hopfian groups which are a special class of relatively hyperbolic groups. However, the recent  work by Fujiwara-Sela \cite{FS20} obtains a positive answer for hyperbolic groups by showing the set $\Omega(H)$ is well-ordered, so $\Omega(H)$ admits a minimum.  This settles a question of de la Harpe.  The starting point of their arguments relies on the fact due to Arzhantseva-Lysenok \cite{AL06} that the growth rate $\omega(H,S)$ is lower bounded by a linear function of  the size of   $S$.

Noting the simple fact $\omega(H,S)\le \log \sharp S$,   the work of \cite{FS20} and \cite{AL06} seem to suggest worthy   understanding  the following set
$$
\Theta(H):=\displaystyle  \left\{ \frac{\omega(H, S)}{\log \sharp S}: \sharp S<\infty,\; \langle S\rangle =H  \right \}.$$
Of course, $\Theta(H)\subset [0, 1]$. A number of inquiries could be made about the nature of  $\Theta(H)$. For instance, could the set  $\Theta(H)$ always be infinite? If it is infinite,   what are the accumulation points of the set $\Theta(H)$?   The purpose of this paper is not to give complete answers to these questions. Instead, we collect here a few simple observations to motivate further investigations. 

A group $H$ has \textit{purely exponential growth} if $\frac{1}{C} \exp(n\omega) \le \sharp S^n\le  C\exp(n\omega)$ for some $C>0$ independent of $n\ge 1$. This class of groups includes (relatively) hyperbolic groups and many other groups (see \cite{Coor} and \cite{YANG10} for relevant discussions). By taking $T_n:=S^n$, one sees \footnote{The authors learnt this fact from Alex Furman.} that $$\frac{\omega(H, T_n)}{\log \sharp T_n}\to 1,\;\text{as } n\to\infty $$ Thus, the upper bound $1$ is an accumulation point for any group with purely exponential growth. On the other hand, the growth tightness \cite{GriH} of free groups implies that  $1\in \Theta(H)$ if and only if $H$ is a free group. Thus it is interesting to ask whether there exist examples with  $\Theta(H)\subset [0, 1-\epsilon]$ for some $\epsilon>0$.  

The examples of Wilson also imply $0\in \overline {\Omega(H)}$ for certain non-uniform exponential growth groups $H$. Analogous to  the question of uniform exponential growth, we can ask for which   groups $\Theta(H)$ admits a positive infimum. In fact, $\inf \Theta(H)>0$ has been obtained for hyperbolic groups in \cite{AL06}.

The main result of this paper is a generalization of  the previous results of Arzhantseva-Lysenok \cite{AL06} to   the class of  relatively hyperbolic groups. Since the official introduction in the Gromov 1987 monograph \cite{Gro}, this class of  groups has been well-studied in last thirty years, see \cite{Farb}, \cite{Bow1}, \cite{Osin}, \cite{DruSapir}, \cite{Ger}. The important examples include Gromov-hyperbolic groups, geometrically finite Kleinian groups (with variable negative curvature), infinitely-ended groups,  small cancellation quotients of free products, limit groups, to name just a few. 

Our main theorem establishes the positive lower bound on $\Theta (H)$ for any non-elementary subgroup in a relatively hyperbolic group. By definition, a subgroup $H$ is called \textit{non-elementary} if its limit set contains at least 3 points. See \textsection\ref{ElemSection} for details.

\begin{thm}\label{mainthm2}
Assume that  $G$ is a non-elementary relatively hyperbolic group. Then there exists a constant $\kappa=\kappa(G)\in (0, 1]$ such that for any non-elementary subgroup $H$ with a finite symmetric generating set $S$, we have 
$$
\omega(H, S) \ge \kappa \cdot \log{ \sharp S}.
$$  
\end{thm}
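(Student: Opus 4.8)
The plan is to find, inside an arbitrary non-elementary subgroup $H$ with generating set $S$, two elements that generate a free semigroup and whose word length with respect to $S$ is bounded above by a constant independent of $S$; this immediately yields $\omega(H,S)\ge \frac{\log 2}{L}$ for that constant $L$. The mechanism for producing such a free semigroup is a ping-pong argument on the Bowditch boundary (or on a cusped/coned-off Cayley graph) of the ambient relatively hyperbolic group $G$. Concretely, I would first record that since $H$ is non-elementary as a subgroup of $G$, it contains two independent loxodromic (hyperbolic, in the relative sense) elements $g,h$ with pairwise distinct fixed-point pairs on $\partial G$; a standard North–South dynamics / ping-pong lemma then shows that for all large $N$ the elements $g^N$ and $h^N$ generate a free group, hence a free semigroup. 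The catch is that the exponent $N$ and the elements $g,h$ could a priori depend on $S$, so their $S$-lengths are not controlled; the whole difficulty is to make this \emph{uniform} in $S$.

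To get uniformity I would argue as follows. Run the ping-pong not with arbitrary $g,h$ but with elements supported on balls of a fixed radius: choose a finite set $F\subset G$ (depending only on $G$, via its finitely many peripheral subgroups and a fixed finite generating set of $G$) such that \emph{every} non-elementary subgroup of $G$ contains an independent pair of loxodromics lying in $F\cdot(\text{generators})\cdot F$ or some similarly bounded-complexity combination of elements of $S$. This is the technical heart and mirrors the Arzhantseva–Lysenok strategy in the hyperbolic case: one shows that within a ball of controlled radius in the word metric $d_S$ one already sees enough of the dynamics on $\partial G$ to extract two loxodromics with a definite ``ping-pong gap'' (a lower bound on the Gromov-product separation of their axes/fixed points, depending only on $G$). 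I would split into cases according to whether $S$ already contains two independent loxodromics with good separation — in which case take $L$ to be a bound coming only from the constants of relative hyperbolicity — or whether the loxodromic elements in $S^{\le k}$ (for a uniform $k$) all have nearly-coincident axes, forcing, by the classification of subgroups, the existence of a short parabolic or elliptic element that can be combined with one loxodromic to manufacture a second independent loxodromic of bounded $S$-length.

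The main obstacle, then, is the \textbf{parabolic/peripheral behaviour}: in a relatively hyperbolic group the ``short'' elements of $H$ in the $d_S$-metric may all lie deep inside (conjugates of) peripheral subgroups, where no ping-pong on $\partial G$ is available and where the geometry is uncontrolled. The resolution should use the relative hyperbolic machinery available from the earlier sections — passage to the cusped space or the coned-off Cayley graph $\mathscr{G}(G,S\cup\mathcal P)$, where peripheral cosets are uniformly bounded, so that an element which is ``long'' relatively but not a genuine loxodromic must travel through many distinct peripheral cosets, and conjugating one fixed short loxodromic by such an element (or by a suitable $h\in S$) breaks the coincidence of axes. One then checks that the transition constants, the distance between nested peripheral cosets, and the acylindricity/contraction constants entering the ping-pong estimate are all functions of $G$ alone, giving a uniform $L=L(G)$ and hence $\kappa=\kappa(G)=\frac{\log 2}{L}\in(0,1]$ (truncating at $1$ using the trivial bound $\omega(H,S)\le\log\sharp S$). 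Finally, I would promote the free-semigroup bound to the stated inequality $\omega(H,S)\ge\kappa\log\sharp S$ by noting that if $\sharp S$ is large one gets correspondingly many independent loxodromics of bounded $S$-length — running the same ping-pong simultaneously on $\asymp\sharp S$ elements of $S$ (after discarding a bounded-index ``bad'' part) produces a free sub-semigroup of rank $\asymp\sharp S$, whence $\omega(H,S)\gtrsim\log\sharp S$; making the implied constant explicit and independent of $S$ is the last, and again the most delicate, bookkeeping step.
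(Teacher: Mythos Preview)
Your outline correctly identifies the target (a free subgroup of rank $\asymp\sharp S$ inside a ball $S^{\le L}$ with $L=L(G)$) and the main obstacle (peripheral behaviour), and the first part of your sketch --- producing \emph{two} independent loxodromics of bounded $S$-length --- is essentially Xie's argument and would indeed yield Theorem~\ref{mainthm}. The genuine gap is in your last paragraph: you assert that ``running the same ping-pong simultaneously on $\asymp\sharp S$ elements of $S$ (after discarding a bounded-index bad part)'' gives a free subgroup of rank $\asymp\sharp S$, but you give no mechanism for this. Elements of $S$ need not be loxodromic at all; even if many are, their axes may all be close to a single axis; and nothing in your argument bounds, uniformly in $S$, the number of $s\in S$ that must be discarded before ping-pong applies. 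This is exactly the step where the paper's work lies, and it is not bookkeeping.

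The paper's resolution is different from what you propose: rather than looking for many loxodromics among short $S$-words, it manufactures a \emph{single} loxodromic $h=fb^{n}$ (with $b\in S^{\le 2}$ a short loxodromic from a strengthened Koubi lemma, $f\in S\setminus E(b)$, and $n$ a uniform constant) that has \emph{large injective radius}: every element of $E(h)$ outside a finite subgroup $F$ of uniformly bounded order $\le 2N_0$ already has displacement larger than any $d(y,s^{-1}s'y)$ with $s,s'\in S$. This forces $s^{-1}s'\notin E(h)$ whenever $sF\ne s'F$, so choosing one $s$ per $F$-coset yields a set $S_0\subset S$ with $\sharp S_0\ge\sharp S/(2N_0)$ such that the conjugates $T=\{sh^{k}s^{-1}:s\in S_0\}$ have pairwise disjoint elementary closures; a local-to-global quasi-geodesic estimate (not boundary ping-pong) then shows $T$ is a free basis. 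The point you are missing is precisely this conjugation trick together with the ``large injectivity'' property of $h$, which converts \emph{all} of $S$ (up to a uniformly bounded factor) into free generators, rather than trying to locate loxodromics inside $S$ itself.
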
 
\begin{rem}
Wilson's example exhibits a sequence of 2-generator sets with growth rate tending 0. This shows that the non-elementary assumption of $H$ is necessary: indeed, any group $H$ can be realized as the maximal parabolic subgroup in  a free product of $H$ with any nontrivial group. 
\end{rem}


A group $G$ has \textit{uniform uniform exponential growth} if every finitely generated subgroup has uniform exponential growth. Xie \cite{Xie} has proved that   relatively hyperbolic groups have uniform exponential growth. As a direct corollary of Theorem \ref{mainthm2}, we obtain a strengthening of Xie's theorem.
\begin{thm} \label{mainthm}
Any non-elementary subgroup $H$ of a non-elementary relatively hyperbolic group $G$ has uniform exponential growth.  
\end{thm}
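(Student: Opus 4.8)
The plan is to obtain Theorem \ref{mainthm} as an immediate consequence of Theorem \ref{mainthm2}. Fix once and for all the constant $\kappa=\kappa(G)\in(0,1]$ furnished by Theorem \ref{mainthm2}; it depends only on $G$. Let $H$ be a finitely generated non-elementary subgroup of $G$ and let $S$ be an arbitrary finite symmetric generating set of $H$. The single extra observation required is a lower bound on $\sharp S$ that is independent of $S$: since the limit set of $H$ contains at least three points, $H$ is infinite (a finite subgroup has empty limit set), and a symmetric generating set $\{s\}$ of size one would force $s=s^{-1}$ and hence $H=\langle s\rangle$ finite; therefore $\sharp S\ge 2$. (In fact $\sharp S\ge 3$, since a symmetric generating set of size two generates a virtually cyclic, hence elementary, subgroup; but $\sharp S\ge 2$ already suffices.)

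Feeding this into Theorem \ref{mainthm2} gives
$$\omega(H,S)\ \ge\ \kappa\cdot\log\sharp S\ \ge\ \kappa\log 2\ >\ 0$$
for every finite symmetric generating set $S$ of $H$. Passing to the infimum over all such $S$ yields $\inf\Omega(H)\ge\kappa\log 2>0$, which is precisely the assertion that $H$ has uniform exponential growth.

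As for the difficulty: essentially all of it is absorbed into Theorem \ref{mainthm2}, and there is no genuine obstacle remaining at this stage. The one point worth underlining is the role of the non-elementarity hypothesis on $H$: beyond ruling out the degenerate cases $\sharp S\in\{0,1\}$, it is exactly the hypothesis under which the linear lower bound of Theorem \ref{mainthm2} is available --- and, as Wilson's examples show (see the Remark following Theorem \ref{mainthm2}), it cannot be dropped. Finally, the argument in fact produces a single constant, $\kappa\log 2$, valid simultaneously for every non-elementary subgroup of $G$; taking $H=G$ recovers Xie's theorem \cite{Xie}, so Theorem \ref{mainthm} is indeed a strengthening of it to all non-elementary subgroups.
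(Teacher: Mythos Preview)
Your proposal is correct and follows essentially the same approach as the paper, which explicitly presents Theorem \ref{mainthm} as a direct corollary of Theorem \ref{mainthm2} without giving a separate argument. The only additional content you supply is the trivial observation that a non-elementary subgroup cannot have a symmetric generating set of size $\le 2$, which is exactly what is needed to pass from the inequality $\omega(H,S)\ge \kappa\log\sharp S$ to a uniform positive lower bound on $\Omega(H)$.
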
 
\begin{rem}
Recall that non-elementary relatively hyperbolic groups $G$ are growth tight, so non-Hopfian ones cannot realize its infimum of $\Omega(G)$ (see \cite{YANG6} \cite{Sam2}). It is thus interesting to known whether  Fujiwara-Sela's result \cite{FS20} can generalize to torsion-free toral relatively hyperbolic groups \cite{G05,G09}. \footnote{This question has been answered for \textit{equationally Noetherian} relatively hyperbolic groups  by Fujiwara's preprint \cite{F21} (posted on 2 Mar. 2021, one day earlier  than ours on arXiv). One ingredient is Theorem \ref{mainthm} which was also obtained by him independently and simultaneously.} 
\end{rem}

\subsection{Connection with other works}

It has been recent interests  to study the product set growth in various classes of groups, starting in free groups \cite{Saf11}, hyperbolic groups and acylindrical hyperbolic groups \cite{DS20}, free product of groups \cite{B13}, and so on.   We refer the reader to \cite{B13} for   further references and connection with approximate groups.  

To be precise, let $S$ be any set in a group $G$ subject to the condition $S$ do not generate a ``small" subgroup. The   {H}elfgott type growth (in the terminology of \cite{B13}) wishes to have the following $$\sharp (S^3) \ge  c \cdot (\sharp S)^{1+\kappa}$$
for some universal $c, \kappa>0$ depending only on $G$. By induction, it is easy to see that if a group $G$ has the {H}elfgott type growth, then Theorem \ref{mainthm2} holds for this group $G$. In this sense,   Theorem \ref{mainthm2} could be understood as asymptotic version of product set growth. Indeed, our proof boils down to a similar product growth with high powers
$$
\forall i\in \mathbb N,\; \sharp (S^{i\kappa}) \ge   (\sharp S)^i
$$
for a universal $\kappa>0$. Even though, our Theorem \ref{mainthm2} cannot be deduced directly from the  result \cite[Theorem 1.9]{DS20}. Their result does provide certain product set growth   only assuming  the acylindrical action on hyperbolic spaces. However,   the large displacement assumption imposed there on $S$ is hard to verify in practice.   
 
Very recently,    Kropholler-Lyman-Ng  obtained independently Theorem \ref{mainthm} as \cite[Proposition 4.12]{KLN} during our writing of this paper. Similar to us, they made a variant of Xie's result  as Lemma \ref{LargeDiamLem}, and then run the remaining argument in \cite{Xie} to get Theorem \ref{mainthm}.   

To conclude the introduction, let us mention briefly the proof of main theorems. We follow   closely the strategy of \cite{AL06} which appears to us quite robust. On the other hand, we have to deal with several difficulties from the relative case. They are resolved largely by adapting the work of Xie \cite{Xie} (see Lemma \ref{LargeDiamLem}) and by a strengthening of Koubi's result \cite{Koubi} (see Lemma \ref{ShortHypLem}). We believe that Lemma \ref{ShortHypLem} has independent interest and admits further applications.

\textbf{Structure of the paper.} This paper is organized as follows. Section \ref{PrelimSection} recalls standard materials in Gromov's hyperbolic geometry, Bowditch-Gromov's definition of relatively hyperbolic groups. As mentioned above, the work of Xie and Koubi are  properly adapted and strengthened in  Section \ref{ShortLoxSection}. A notion  of loxodromic elements with large injectivity is introduced in Section \ref{LargeInjSection} to streamline  the strategy of Arzhantseva-Lysenok. The proof of Theorem \ref{mainthm2} is then completed in Section \ref{MainProofSection}. 

\textbf{Acknowledgment.}
We would like to thank  Igor Lysenok   for helpful conversations and Thomas Ng for several corrections.

\section{Preliminary}\label{PrelimSection}
Consider an isometric action of $G$ on a metric space $(X,d)$.  Let $S\subset G$ be a set of isometries. 
Denote $\ell_x(S):= \max_{s\in S}\{d(x,sx)\}$ for a given point $x\in X$. For a subset $A\subset X$, define
$$\ell_A(S):=\displaystyle \inf_{x\in A} \ell_x(S).$$ 
Note that $x\in X\mapsto \ell_x(S)\in \mathbb R$ is a continuous non-negative function. 

\subsection{Hyperbolic spaces and Loxodromic elements}

Define the Gromov product $$\forall x,y,o\in X,\; \langle x, y\rangle_o=\frac{d(x,o)+d(y,o)-d(x,y)}{2}.$$
A geodesic metric space $X$ is called \textit{hyperbolic} if any geodesic triangle is $\delta$-thin: if $d(o, p)=d(o,q)\le \langle x, y\rangle_o$ for two points $p\in [o,x], q\in [o,y]$, then $d(p,q)\le \delta$.
Then for any $x, y,z, o\in X$, we have
$$
\langle x,y\rangle_o \ge \min\{\langle x,z\rangle_o, \langle z,y\rangle_o\}-\delta.
$$

Assume that a finitely generated group $G$ acts properly by isometry on a proper hyperbolic space $X$. Then the induced action of $G$ on the Gromov boundary $\partial X$ of $X$ is a convergence group action. Thus, any infinite order element fixes at least one but at most two points in $\partial X$. So  the elements in $G$ are classified into three nonexclusive classes: \textit{elliptic isometry} with finite order elements, \textit{parabolic isometry} with only fixed point and \textit{loxodromic isometry} with exactly two fixed points. See \cite{Bow1} for a detailed discussion about convergence group actions and relevant notions.

Equivalently, an isometry $g$ on a proper hyperbolic space $X$ is   {loxodromic} if it admits  a $(\lambda,c)$-quasi-geodesic $\gamma$ for some $\lambda,c>0$ so that  $\gamma, g\gamma$ have finite Hausdorff distance. Such quasi-geodesics shall be referred to as  $(\lambda,c)$-\textit{quasi-axis}.

\begin{lem}\cite[Lemma 9.2.2]{CDP}\label{LoxoCriterionLem}
If $g$ is an isometry satisfying  $$d(o,go) \ge 2\langle o, g^2 o\rangle_{go} + 6\delta$$ for some point $o \in X$, then $g$ is loxodromic.  
\end{lem}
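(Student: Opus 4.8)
The plan is to verify the loxodromic criterion of Lemma~\ref{LoxoCriterionLem} directly, by finding a point $o\in X$ at which the displacement $d(o,go)$ dominates $2\langle o, g^2 o\rangle_{go}+6\delta$. The statement to be proven is precisely that lemma, so I take as given the $\delta$-thinness of triangles and the associated four-point inequality on Gromov products displayed above; the task reduces to an elementary metric argument with those inequalities. First I would unwind the definition of the Gromov product based at $go$: writing $A=d(o,go)$, $B=d(go,g^2o)=A$ (since $g$ is an isometry), and $D=d(o,g^2o)$, one has $2\langle o,g^2o\rangle_{go}=A+B-D=2A-D$, so the hypothesis $d(o,go)\ge 2\langle o,g^2o\rangle_{go}+6\delta$ rewrites as $A\ge 2A-D+6\delta$, i.e. $D\ge A+6\delta$, meaning $g^2$ moves $o$ nearly as far as the sum of the two consecutive $g$-steps; this is the ``no backtracking at scale $\delta$'' condition.

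Next I would iterate this to the broken path $o, go, g^2o,\dots, g^n o$. The key step is to show by induction that the Gromov product $\langle g^{k-1}o, g^{k+1}o\rangle_{g^k o}$ stays bounded by a fixed constant (it equals $\langle o,g^2 o\rangle_{go}$ by equivariance, which by the above is $\le A/2 - 3\delta$), and then to use the four-point inequality $\langle x,y\rangle_o\ge\min\{\langle x,z\rangle_o,\langle z,y\rangle_o\}-\delta$ repeatedly to control $\langle o, g^n o\rangle_{g^k o}$ for the intermediate vertices $g^k o$, $0<k<n$. Concretely, a standard ``local-to-global'' estimate shows that if at each interior vertex of a broken geodesic the incoming and outgoing segments make Gromov product at most $A/2-3\delta$ (comfortably below half the segment lengths), then the broken path is a quasigeodesic with controlled constants: one obtains $d(o,g^n o)\ge n(A - \text{const}\cdot\delta)$, which is linear in $n$ with positive coefficient since $A\ge 6\delta$ forces $A-\text{const}\cdot\delta>0$ after checking the constant. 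In particular the orbit map $n\mapsto g^n o$ is a quasi-isometric embedding of $\mathbb Z$, so $g$ has infinite order and its quasi-axis is the path through $\{g^n o\}$; by the equivalent characterization of loxodromic isometries recalled just before the lemma (a $(\lambda,c)$-quasi-axis $\gamma$ with $\gamma, g\gamma$ at finite Hausdorff distance), $g$ is loxodromic.

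The main obstacle is the bookkeeping in the induction that upgrades the single-vertex estimate $D\ge A+6\delta$ into the global linear lower bound $d(o,g^no)\gtrsim nA$: one must track how the Gromov products $\langle o, g^n o\rangle_{g^k o}$ at the interior vertices accumulate error, and choose the threshold ($A/2-3\delta$ versus the segment half-length $A/2$) so that the $-\delta$ losses in successive applications of the four-point inequality do not eat up the whole budget. This is where the precise constant $6\delta$ (rather than a smaller multiple of $\delta$) is used, giving exactly the slack needed; I would set it up as a clean induction on $n$ proving simultaneously a lower bound on $d(o,g^no)$ and an upper bound on $\max_{0<k<n}\langle o,g^no\rangle_{g^k o}$. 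Everything else — the reduction to $D\ge A+6\delta$, the appeal to equivariance, and the final invocation of the quasi-axis characterization — is routine.
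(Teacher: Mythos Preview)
The paper does not give its own proof of this lemma: it is stated with a citation to \cite[Lemma~9.2.2]{CDP} and used as a black box. So there is no proof in the paper to compare against.

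That said, your outline is the standard argument and is correct. Your rewriting of the hypothesis as $D\ge A+6\delta$, equivalently $p:=\langle o,g^2o\rangle_{go}\le A/2-3\delta$, is exactly right, and the equivariance step giving $\langle g^{k-1}o,g^{k+1}o\rangle_{g^ko}=p$ for all $k$ sets up the local-to-global estimate. In fact the paper already records the precise tool you need: applying Lemma~\ref{LocalGeodLem} to $x_i=g^{i-1}o$ (the hypothesis $2p\le A-3\delta$ holds since $2p\le A-6\delta$) yields
\[
d(o,g^no)\;\ge\; nA-2(n-1)(p+\delta)\;=\;n\bigl(A-2p-2\delta\bigr)+2(p+\delta)\;\ge\;4\delta\,n,
\]
so $\tau(g)\ge 4\delta>0$ and $g$ is loxodromic. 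This is exactly the ``clean induction'' you describe, with the bookkeeping done once and for all in Lemma~\ref{LocalGeodLem}; the constant $6\delta$ is precisely what makes the slope $A-2p-2\delta\ge 4\delta$ strictly positive. One small quibble: your displayed bound $d(o,g^no)\ge n(A-\text{const}\cdot\delta)$ is slightly misleading, since the subtracted term is $2p+2\delta$ and $p$ may be of order $A$ rather than $\delta$; what matters (and what you correctly identify) is that the slope stays at least $4\delta$.
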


\begin{lem}\cite[Lemma 1]{AL06}\label{LocalGeodLem}
Let $x_1, x_2, \cdots, x_k$ for $k\ge 3$ be points in a $\delta$-hyperbolic space such that for any $2\le i\le k-2$, we have
$$
\langle x_{i-1},x_{i+1}\rangle_{x_i}+\langle x_{i},x_{i+2}\rangle_{x_{i+1}}\le d(x_i, x_{i+1})-3\delta
$$
Then 
$$
d(x_1,x_k)\ge \sum_{i=1}^{k-1} d(x_i, x_{i+1}) -2\sum_{i=2}^{k-1} (\langle x_{i-1},x_{i+1}\rangle_{x_i} +\delta ).
$$
\end{lem}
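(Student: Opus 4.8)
The plan is to prove Lemma~\ref{LocalGeodLem} by induction on $k$, treating the chain $x_1, \dots, x_k$ as a ``local quasi-geodesic'' whose hypothesis at each interior vertex says that consecutive Gromov products are small relative to the edge length. The base case $k=3$ is essentially the definition of the Gromov product: $d(x_1,x_3) = d(x_1,x_2)+d(x_2,x_3) - 2\langle x_1,x_3\rangle_{x_2}$, which already matches (indeed beats) the claimed inequality since the right-hand side has an extra $-2\delta$ term.

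For the inductive step, I would try to peel off one end of the chain --- say the vertex $x_1$ --- and apply the hypothesis to $x_2, \dots, x_k$. The key geometric input is a lower bound on the Gromov product $\langle x_1, x_k\rangle_{x_2}$, which controls how much ``backtracking'' occurs when we concatenate the edge $[x_1,x_2]$ with the (almost-geodesic) path from $x_2$ to $x_k$. Using $\delta$-hyperbolicity repeatedly along the chain $x_2,\dots,x_k$ one expects an estimate of the form $\langle x_1,x_k\rangle_{x_2} \le \langle x_1,x_3\rangle_{x_2} + (\text{a telescoping error controlled by the other Gromov products and }\delta)$; the hypothesis ensures these errors stay below the edge lengths so they cannot accumulate past the ``midpoints'' of the edges. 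Then one writes $d(x_1,x_k) = d(x_1,x_2) + d(x_2,x_k) - 2\langle x_1,x_k\rangle_{x_2}$, plugs in the inductive lower bound for $d(x_2,x_k)$ and the upper bound for $\langle x_1,x_k\rangle_{x_2}$, and checks the bookkeeping of which $\langle x_{i-1},x_{i+1}\rangle_{x_i}$ and which $\delta$'s appear --- the target sum runs $i=2$ to $k-1$ on the Gromov products and the $\delta$ count is arranged to absorb the $3\delta$ slack in the hypothesis at each step.

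The main obstacle I anticipate is precisely this propagation estimate for $\langle x_1,x_k\rangle_{x_2}$: hyperbolicity only gives $\langle x,y\rangle_o \ge \min\{\langle x,z\rangle_o,\langle z,y\rangle_o\} - \delta$, and to turn a one-step comparison into a bound along the whole chain one needs the hypothesis to prevent the Gromov products from ever exceeding the half-edge-length, i.e.\ one must show inductively that the ``entry point'' of the path from $x_2$ toward $x_k$ as seen from $x_2$ stays within bounded distance of $x_2$ and in particular before the point on $[x_1,x_2]$ at distance $\langle x_1,x_3\rangle_{x_2}+\delta$ or so. This is the standard but slightly delicate local-to-global argument for quasi-geodesics, specialized to the piecewise form; the content of the hypothesis $\langle x_{i-1},x_{i+1}\rangle_{x_i} + \langle x_i,x_{i+2}\rangle_{x_{i+1}} \le d(x_i,x_{i+1}) - 3\delta$ is exactly what makes the induction close with the right constants. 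Once that estimate is in hand, the rest is the arithmetic of summing the edge contributions and the Gromov-product corrections, which I would not belabor.
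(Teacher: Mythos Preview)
The paper does not supply its own proof of this lemma; it is quoted from \cite[Lemma 1]{AL06} and used as a black box. So there is nothing in the paper to compare your argument against.

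That said, your inductive plan is the standard one and is correct. The auxiliary estimate you single out as the ``main obstacle'' is exactly the heart of the matter, and it goes through cleanly: one shows by downward induction on $i$ (from $i=k-1$ to $i=2$) that
\[
\langle x_{i-1},x_k\rangle_{x_i}\le \langle x_{i-1},x_{i+1}\rangle_{x_i}+\delta.
\]
For the inductive step one applies the four-point inequality at $x_i$ with the triple $x_{i-1},x_{i+1},x_k$, uses the identity $\langle x_k,x_{i+1}\rangle_{x_i}=d(x_i,x_{i+1})-\langle x_i,x_k\rangle_{x_{i+1}}$ together with the inductive bound at $i+1$, and then invokes the hypothesis $\langle x_{i-1},x_{i+1}\rangle_{x_i}+\langle x_i,x_{i+2}\rangle_{x_{i+1}}\le d(x_i,x_{i+1})-3\delta$ to force the minimum in the hyperbolicity inequality onto the $\langle x_{i-1},x_k\rangle_{x_i}$ term. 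With the $i=2$ case in hand, the identity $d(x_1,x_k)=d(x_1,x_2)+d(x_2,x_k)-2\langle x_1,x_k\rangle_{x_2}$ plus the inductive hypothesis on the chain $x_2,\dots,x_k$ yields exactly the claimed bound, with the $i=2$ summand supplied by the auxiliary estimate.

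One small slip: you wrote ``lower bound on $\langle x_1,x_k\rangle_{x_2}$'' where you need---and in the next paragraph correctly use---an \emph{upper} bound.
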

The following immediate corollary will be actually used.  
\begin{cor}\label{LocalGeodCor}
Under the assumption of Lemma \ref{LocalGeodLem}, if 
$$
\langle x_{i-1},x_{i+1}\rangle_{x_i}+\langle x_{i},x_{i+2}\rangle_{x_{i+1}}\le d(x_i, x_{i+1})/4-\delta
$$
then
$$
d(x_1,x_k)\ge \frac{1}{2} \sum_{i=1}^{k-1} d(x_i, x_{i+1}).
$$
\end{cor}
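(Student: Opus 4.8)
The plan is to read this off directly from Lemma~\ref{LocalGeodLem}: I will show that, under the sharper hypothesis, the correction term $2\sum_{i=2}^{k-1}\bigl(\langle x_{i-1},x_{i+1}\rangle_{x_i}+\delta\bigr)$ occurring in the conclusion of that lemma is at most $\frac12\sum_{i=1}^{k-1}d(x_i,x_{i+1})$, which gives the claim at once.

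First I would check that the hypothesis of the corollary does imply that of Lemma~\ref{LocalGeodLem}. Since Gromov products are non-negative, the inequality $\langle x_{i-1},x_{i+1}\rangle_{x_i}+\langle x_{i},x_{i+2}\rangle_{x_{i+1}}\le d(x_i,x_{i+1})/4-\delta$ forces $d(x_i,x_{i+1})\ge 4\delta$ for $2\le i\le k-2$, and then $d(x_i,x_{i+1})/4-\delta\le d(x_i,x_{i+1})-3\delta$; hence Lemma~\ref{LocalGeodLem} applies and yields
\[
d(x_1,x_k)\ \ge\ \sum_{i=1}^{k-1}d(x_i,x_{i+1})\ -\ 2\sum_{i=2}^{k-1}\bigl(\langle x_{i-1},x_{i+1}\rangle_{x_i}+\delta\bigr).
\]
Now write $a_i:=\langle x_{i-1},x_{i+1}\rangle_{x_i}$, so that $\langle x_i,x_{i+2}\rangle_{x_{i+1}}=a_{i+1}$ and the hypothesis reads $a_i+a_{i+1}\le d(x_i,x_{i+1})/4-\delta$ for $2\le i\le k-2$. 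Since each $a_i\ge 0$ and every $a_i$ with $3\le i\le k-2$ occurs twice (and $a_2,a_{k-1}$ once) in $\sum_{i=2}^{k-2}(a_i+a_{i+1})$, we get the telescoping-type bound
\[
\sum_{i=2}^{k-1}a_i\ \le\ \sum_{i=2}^{k-2}(a_i+a_{i+1})\ \le\ \frac14\sum_{i=2}^{k-2}d(x_i,x_{i+1})-(k-3)\delta .
\]
Multiplying by $2$ and adding back the $2(k-2)\delta$ from the lemma's error term collapses the $\delta$-contributions to a single $2\delta$, so $2\sum_{i=2}^{k-1}(a_i+\delta)\le \frac12\sum_{i=2}^{k-2}d(x_i,x_{i+1})+2\delta$. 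Using $\sum_{i=2}^{k-2}d(x_i,x_{i+1})\le\sum_{i=1}^{k-1}d(x_i,x_{i+1})$ and that the residual $2\delta$ is dominated by the two omitted end segments $d(x_1,x_2)+d(x_{k-1},x_k)$, this is at most $\frac12\sum_{i=1}^{k-1}d(x_i,x_{i+1})$, and plugging back into the displayed lower bound finishes the proof.

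The arithmetic is routine; the only point demanding genuine care is the treatment of the two ``boundary'' Gromov products $\langle x_1,x_3\rangle_{x_2}$ and $\langle x_{k-2},x_k\rangle_{x_{k-1}}$. The hypothesis controls each of them only jointly with the neighbouring product, so when charging the error term against $\sum d(x_i,x_{i+1})$ one must avoid spending the segments $d(x_2,x_3)$ and $d(x_{k-2},x_{k-1})$ twice, and must absorb the leftover $2\delta$ coming from the lemma's additive $\delta$'s. The telescoping inequality $\sum_{i=2}^{k-1}a_i\le\sum_{i=2}^{k-2}(a_i+a_{i+1})$ is exactly the device that prevents the double counting, and it is also the reason the constant in the conclusion is $\frac12$ rather than something closer to $1$.
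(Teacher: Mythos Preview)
Your approach is exactly the intended one—the paper offers no proof beyond calling this an ``immediate corollary'' of Lemma~\ref{LocalGeodLem}, and deducing it by bounding the correction term $2\sum_{i=2}^{k-1}(\langle x_{i-1},x_{i+1}\rangle_{x_i}+\delta)$ is the right idea. Your telescoping bound
\[
\sum_{i=2}^{k-1}a_i \ \le\ \sum_{i=2}^{k-2}(a_i+a_{i+1}) \ \le\ \tfrac14\sum_{i=2}^{k-2}d(x_i,x_{i+1})-(k-3)\delta
\]
is correct (for $k\ge 4$), and it leads, as you say, to
\[
2\sum_{i=2}^{k-1}(a_i+\delta)\ \le\ \tfrac12\sum_{i=2}^{k-2}d(x_i,x_{i+1})+2\delta .
\]

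The gap is in the very last step. You assert that the residual $2\delta$ is ``dominated by the two omitted end segments $d(x_1,x_2)+d(x_{k-1},x_k)$'', i.e.\ that $d(x_1,x_2)+d(x_{k-1},x_k)\ge 4\delta$. Nothing in the stated hypotheses forces this: the assumption only constrains the edges $d(x_i,x_{i+1})$ for $2\le i\le k-2$, while the first and last edges are completely free. (Indeed, for $k=3$ the hypothesis is vacuous and the conclusion can fail outright, e.g.\ take $x_1=x_3$; so the corollary as literally stated already needs $k\ge 4$.) For $k\ge 4$ the same $2\delta$ shortfall persists no matter how you reshuffle the bookkeeping through Lemma~\ref{LocalGeodLem}; the extra inequality $a_2\le d(x_1,x_2)$, $a_{k-1}\le d(x_{k-1},x_k)$ does not rescue it with the constant $\tfrac12$. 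In every use of the corollary in the paper the endpoint edges satisfy $d(x_i,x_{i+1})\ge 10C>10\delta$, so the missing inequality holds trivially there; but your write-up should either add this mild extra assumption on the end segments, or note explicitly that the $2\delta$ absorption requires it.
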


\begin{lem}\label{LoxoCriterion2Lem}
If $g, h$ are two  isometries satisfying  $$\frac{1}{4}\min\{d(go, o), d(ho, o)\} \ge  \max\{\langle go, h^{-1} o\rangle_o,\langle g^{-1}o, h o\rangle_o\} + \delta$$ for some point $o \in X$. Then 
\begin{enumerate}
    \item 
    $gh$ is loxodromic.
    \item
    there exist constants $\lambda, c>0$ depending only on $\delta$ such that the concatenated path $\bigcup_{i\in\mathbb Z} (gh)^{i}\left([o, go]\cdot g[o,ho]\right)$ is a $(\lambda,c)$-quasi-geodesic. 
    \item
    there exists a constant $C=C(\delta)>0$ such that 
    $|\ell_X(gh) -d(o, gho)|\le C.$
\end{enumerate}  
\end{lem}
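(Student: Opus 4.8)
The hypothesis is a ``small Gromov products at $o$'' condition on the two pieces $[o,go]$ and $g[o,ho]$ (and their translates). The strategy is to verify that the bi-infinite concatenation $L := \bigcup_{i\in\mathbb Z}(gh)^i([o,go]\cdot g[o,ho])$ satisfies the local-geodesic hypothesis of Corollary~\ref{LocalGeodCor}, deduce that it is an unparametrized quasi-geodesic, and then read off all three conclusions. First I would set up the vertex sequence of $L$: writing $x_0 = o$, $x_1 = go$, $x_2 = gho$, and in general $x_{2i} = (gh)^i o$, $x_{2i+1} = (gh)^i g o$, the consecutive distances are $d(x_{2i},x_{2i+1}) = d(o,go)$ and $d(x_{2i+1},x_{2i+2}) = d(o,ho)$, by equivariance. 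The three Gromov products appearing at a generic vertex are, after translating back to $o$ by the appropriate power of $gh$ (or by $(gh)^i g$), exactly of the form $\langle go, h^{-1}o\rangle_o$ or $\langle g^{-1}o, ho\rangle_o$, which are precisely the quantities bounded in the hypothesis. Thus the hypothesis $\tfrac14\min\{d(go,o),d(ho,o)\} \ge \max\{\langle go,h^{-1}o\rangle_o,\langle g^{-1}o,ho\rangle_o\}+\delta$ translates directly into the inequality
$$
\langle x_{i-1},x_{i+1}\rangle_{x_i} + \langle x_i,x_{i+2}\rangle_{x_{i+1}} \le d(x_i,x_{i+1})/4 - \delta
$$
needed to apply Corollary~\ref{LocalGeodCor}, after checking the two terms on the left are each at most $d(x_i,x_{i+1})/8 - \delta/2$ using that $d(x_i,x_{i+1}) \ge \min\{d(go,o),d(ho,o)\}$.

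Granting that, Corollary~\ref{LocalGeodCor} applied to any finite subsegment $x_j,\dots,x_k$ of $L$ gives $d(x_j,x_k) \ge \tfrac12\sum_{i=j}^{k-1} d(x_i,x_{i+1})$, and since consecutive vertices are a bounded distance apart and interior points of each edge are within $\max\{d(go,o),d(ho,o)\}$ of an endpoint, standard bookkeeping upgrades this to: $L$ is a $(\lambda,c)$-quasi-geodesic with $\lambda,c$ depending only on $\delta$ (here one uses that the per-edge lengths are themselves $\ge \min\{d(go,o),d(ho,o)\} \ge 4\delta$, so they do not degenerate). This is conclusion (2). Since $L$ is $(gh)$-invariant and is a quasi-geodesic, $L$ is a quasi-axis for $gh$, so $gh$ is loxodromic; that is conclusion (1). (Alternatively (1) follows from Lemma~\ref{LoxoCriterionLem} applied to $g' = gh$ and the point $o$, estimating $\langle o, (gh)^2 o\rangle_{gho}$ via the thinness along $L$ — but going through the quasi-axis is cleaner and is needed for (2) anyway.)

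For conclusion (3), I want $|\ell_X(gh) - d(o,gho)| \le C(\delta)$. The lower bound $\ell_X(gh) \ge d(o,gho) - C$ — equivalently the stable-translation-length type estimate — comes from the fact that $L$ is a quasi-axis: any point $x\in X$ is within distance $C'(\lambda,c)$ of $L$ (quasi-axes are quasiconvex / the orbit map is a quasi-isometric embedding onto $L$'s coarse image), and moving a near-$L$ point by $gh$ displaces it by at least the distance $gh$ moves along $L$ minus an error controlled by $\delta$ and $(\lambda,c)$, which is $d(o,gho) - C$. The upper bound $\ell_X(gh) \le \ell_o(gh) = d(o,gho)$ is immediate from the definition of $\ell_X$ as an infimum over $X$. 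Combining gives (3) with $C = C(\delta)$ since $\lambda, c$ depend only on $\delta$.

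The main obstacle is the bookkeeping in the first paragraph: one must be careful that \emph{every} interior vertex of $L$ — both the ``$go$-type'' vertices $x_{2i+1}$ and the ``$o$-type'' vertices $x_{2i}$ — produces, upon translating to the base point, a pair of Gromov products drawn from the set $\{\langle go,h^{-1}o\rangle_o,\ \langle g^{-1}o,ho\rangle_o\}$, possibly after using the symmetry $\langle a,b\rangle_o = \langle a^{-1}\cdot(\text{via a translate})\rangle$ reindexing; getting the translations and the roles of $g$ versus $g^{-1}$, $h$ versus $h^{-1}$ exactly right (so that the hypothesis, which is stated with both $\langle go,h^{-1}o\rangle_o$ and $\langle g^{-1}o,ho\rangle_o$, covers all cases) is the only genuinely delicate point. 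Everything after the quasi-geodesicity of $L$ is a routine application of standard facts about quasi-axes in $\delta$-hyperbolic spaces.
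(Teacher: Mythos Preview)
Your approach is essentially the paper's: set up the vertex sequence $x_1=o$, $x_2=go$, $x_3=gho,\dots$, observe that the Gromov products at interior vertices are exactly $\langle go,h^{-1}o\rangle_o$ and $\langle g^{-1}o,ho\rangle_o$, feed this into the local-to-global estimate, and read off (1)--(3). The paper's write-up is terser than yours and leaves (2), (3) implicit.

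There is one genuine slip. The hypothesis of Corollary~\ref{LocalGeodCor} is \emph{not} implied by the assumption of the lemma. Each Gromov product at an interior vertex is at most $\max\{\langle go,h^{-1}o\rangle_o,\langle g^{-1}o,ho\rangle_o\}\le \tfrac{1}{4}\min-\delta$ (with $\min:=\min\{d(o,go),d(o,ho)\}$), so the sum of two consecutive products is only bounded by $\tfrac{1}{2}\min-2\delta$, not by $\tfrac{1}{4}d(x_i,x_{i+1})-\delta$ as the corollary requires. Your assertion that each term is at most $d(x_i,x_{i+1})/8-\delta/2$ is wrong: you only get $d(x_i,x_{i+1})/4-\delta$, which is larger whenever $d(x_i,x_{i+1})>4\delta$. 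The fix --- and this is precisely what the paper does --- is to invoke Lemma~\ref{LocalGeodLem} itself. Its weaker hypothesis $\le d(x_i,x_{i+1})-3\delta$ is satisfied (since $\tfrac{1}{2}\min-2\delta\le \min-3\delta$ follows from $\min\ge 4\delta$, which the assumption forces), and then the bound $d(x_1,x_{2n+1})\ge\tfrac12\sum d(x_i,x_{i+1})$ follows by charging each term $\langle x_{i-1},x_{i+1}\rangle_{x_i}+\delta\le \tfrac14\min$ against a quarter of each adjacent edge. After this correction your outline goes through.
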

\begin{proof}
The proof uses the well-known fact that long local geodesics are global quasi-geodesics. To be precise, applying Lemma \ref{LocalGeodLem} to the points $x_1=o,\; x_2=go, \;x_3=gho,\;...,\; x_{2n+1}=(gh)^{n}o$,    we have
$$
d(x_1, x_{2n+1}) \ge  \sum_{k=1}^{2n} d(x_i, x_{i+1})-2\sum_{k=2}^{2n}  (\langle x_{i-1}, x_{i+1}\rangle_{x_i} +\delta )\ge \frac{1}{2}\sum_{k=1}^{2n} d(x_i, x_{i+1}). 
$$
The proof is completed.
\end{proof}

Define the asymptotic \textit{translation length} of an isometry $g$ as follows
$$
\tau(g):=\lim_{n\to\infty}\frac{d(o, g^no)}{n}
$$
for some (thus any) point $o\in X$.
\begin{lem}\cite[Proposition 10. 6.4]{CDP}\label{TransLengthLem}
If $g$ is a loxodromic element, then $|\ell_X(g)-\tau(g)|\le 16\delta.$
\end{lem}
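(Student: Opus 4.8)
The plan is to establish the two one-sided inequalities separately. The bound $\tau(g)\le \ell_X(g)$ is purely formal: first note that $\tau(g)$ is independent of the basepoint, since $n\mapsto d(o,g^no)$ is subadditive (so the limit exists and equals $\inf_n d(o,g^no)/n$) and changing $o$ alters each term by a bounded amount. Then for any $x\in X$ the triangle inequality along the orbit gives $d(x,g^nx)\le\sum_{i=0}^{n-1}d(g^ix,g^{i+1}x)=n\,d(x,gx)$, so $\tau(g)\le d(x,gx)=\ell_x(g)$, and taking the infimum over $x$ gives $\tau(g)\le\ell_X(g)$. All of the content is in the opposite inequality $\ell_X(g)\le\tau(g)+16\delta$.

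For the reverse inequality I would argue as follows. If $\ell_X(g)\le 16\delta$ there is nothing to prove, so set $A:=\ell_X(g)>16\delta$ and, for a small $\epsilon>0$, fix a point $x$ with $A':=d(x,gx)\le A+\epsilon$ (the displacement function $d(\cdot,g\cdot)$ need not attain its infimum, so I work with near-minimizers throughout). The key claim is that near-minimality forces the Gromov product $t:=\langle g^{-1}x,gx\rangle_x$ to satisfy $2t\le\epsilon+\delta$. To see this, fix a geodesic $[g^{-1}x,x]$, put $[x,gx]:=g[g^{-1}x,x]$, and in the $\delta$-thin triangle $g^{-1}x,\,x,\,gx$ take the points $p\in[x,g^{-1}x]$ and $q\in[x,gx]$ at distance $r:=\min\{t,A'/2\}$ from $x$, so that $d(p,q)\le\delta$. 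If $r=A'/2<t$, then $gp$ and $q$ are both the point of $[x,gx]$ at distance $A'/2$ from $x$, whence $d(p,gp)=d(p,q)\le\delta$ and $A=\ell_X(g)\le\delta$, contradicting $A>16\delta$. So $r=t\le A'/2$; then $gp$ lies on $[x,gx]$ at distance $A'-t\ge t$ from $x$, so $d(q,gq)\le d(q,gp)+d(gp,gq)=(A'-2t)+d(p,q)\le A'-2t+\delta$, and since $d(q,gq)\ge A$ this yields $2t\le A'-A+\delta\le\epsilon+\delta$, as claimed.

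With this in hand, put $x_i:=g^ix$ for $i\in\mathbb Z$. Since $g$ acts by isometries, $\langle x_{i-1},x_{i+1}\rangle_{x_i}=t$ for every $i$, so (using $A'\ge A>16\delta$ and $\epsilon$ small, one has $2t\le\epsilon+\delta\le A'-3\delta$) the tuple $x_1,\dots,x_{n+1}$ meets the hypothesis of Lemma~\ref{LocalGeodLem}, and its conclusion gives $d(x,g^nx)\ge n\,A'-2(n-1)(t+\delta)\ge nA-(n-1)(\epsilon+3\delta)$. Dividing by $n$, letting $n\to\infty$, and then $\epsilon\to0$, we obtain $\tau(g)\ge A-3\delta=\ell_X(g)-3\delta$, which is in fact stronger than the asserted $16\delta$.

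I expect the middle paragraph to be the only genuine obstacle: converting near-minimality of the displacement into a quantitative bound on the Gromov product at a near-minimizing point is the single place the geometry is used, and one must be a little careful with the elementary case split and with the fact that $d(\cdot,g\cdot)$ is not a proper function (it stays close to $\tau(g)$ along the axis), which is precisely why $\epsilon$-near-minimizers are used rather than a minimizing point. Everything after that is routine bookkeeping with Lemma~\ref{LocalGeodLem}, and the generous gap between the $3\delta$ we obtain and the $16\delta$ claimed leaves plenty of room for any small constants lost in choosing geodesics.
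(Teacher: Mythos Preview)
The paper does not supply its own proof of this lemma: it is quoted verbatim from \cite[Proposition~10.6.4]{CDP} and used as a black box, so there is nothing in the paper to compare your argument against.

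That said, your argument is correct and is essentially the standard proof one finds in the literature (and, in particular, is close in spirit to the argument in CDP). The two ingredients---bounding $\langle g^{-1}x,gx\rangle_x$ at a near-minimizer of the displacement by moving to a nearby point on $[x,gx]$, and then propagating along the orbit via a local-to-global estimate---are exactly what is needed, and your use of the paper's Lemma~\ref{LocalGeodLem} for the second step is appropriate. Your case split in the middle paragraph is handled cleanly: the choice $[x,gx]:=g[g^{-1}x,x]$ makes $gp$ and $q$ lie on the \emph{same} geodesic, so the arithmetic $d(q,gp)=A'-2t$ is exact, and the rest is triangle inequality. The bound $\tau(g)\ge \ell_X(g)-3\delta$ you obtain is indeed sharper than the stated $16\delta$; this is not a mistake on your part, just the usual slack in these constants (different conventions for $\delta$-hyperbolicity and different bookkeeping along the way account for the discrepancy). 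One microscopic point: when you invoke Lemma~\ref{LocalGeodLem} you need $k\ge 4$ for the hypothesis to be nonvacuous, but since you take $n\to\infty$ this is harmless.
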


We say that an element $g\in G$ \textit{preserves the orientation} of the bi-infinite quasi-geodesic $\gamma$ if $\alpha, g\alpha$ has finite Hausdorff distance for any half-ray $\alpha$ of  $\gamma$. It is clear that a loxodromic element preserves the orientation of any quasi-axis.

\begin{lem}\label{OrientationLem}
If $g$ preserves the orientation of $(\lambda, c)$-quasi-axis $\gamma$, then there exists a constant $C$ depending on $\lambda, c, \delta$ with the following property. For any $x\in \gamma$, there exists $ y\in \gamma$ such that    $\langle x, gx\rangle_y<C$ and $\langle y, gy\rangle_{gx}<C$.
\end{lem}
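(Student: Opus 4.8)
The plan is to exploit that $g$, preserving the orientation of $\gamma$, acts on $\gamma$ as a \emph{coarse translation} in a fixed direction, and then to place $y$ on $\gamma$ ``between'' $x$ and $gx$ along the parametrization; the two Gromov product estimates then follow from the uniform thinness of quasi-geodesic triangles whose sides run along $\gamma$ and $g\gamma$.

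First I would record the Morse-type input. Fix a parametrization $\gamma\colon\mathbb R\to X$; since $g$ is an isometry, $g\gamma$ is again a $(\lambda,c)$-quasi-geodesic. Orientation-preservation means every half-ray of $\gamma$ maps to a half-ray at finite Hausdorff distance from itself, so $\gamma$ and $g\gamma$ have the \emph{same} ordered pair of endpoints in $\partial X$. By stability of quasi-geodesics there is $R=R(\lambda,c,\delta)$ such that any $(\lambda,c)$-quasi-geodesic and any geodesic with the same (finite or ideal) endpoints lie within Hausdorff distance $R$; applying this to $\gamma$, to $g\gamma$, and to the bi-infinite geodesic between their common ideal endpoints gives $d_{\mathrm{Haus}}(\gamma,g\gamma)\le 2R$, so from now on all constants depend on $\lambda,c,\delta$ only. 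Moreover, if $p,q,r\in\gamma$ occur in this order along the parametrization, then $q$ is $R$-close to the geodesic $[p,r]$ and hence $\langle p,r\rangle_q\le R$.

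Next I would make the coarse translation precise. Define $\phi\colon\mathbb R\to\mathbb R$ by choosing, for each $t$, a value $\phi(t)$ with $d\big(\gamma(\phi(t)),\,g\gamma(t)\big)\le 2R$. After replacing $\gamma$ by its orientation reversal if necessary (still a quasi-axis preserved by $g$), I may assume $\phi$ moves parameters forward up to a bounded additive error. \textbf{The main step, which I expect to require the most care,} is to show that $\phi$ is coarsely monotone increasing with constants depending only on $\lambda,c,\delta$; equivalently, $g$ never locally reverses direction on $\gamma$. This is where orientation-preservation is genuinely used: if $\phi$ decreased on a long interval, a long forward subray of $g\gamma$ would fellow-travel a \emph{backward} subray of $\gamma$, contradicting that $g\gamma$ and $\gamma$ lie at Hausdorff distance $\le 2R$ and share the same forward endpoint.

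Finally, given $x=\gamma(a)$, I would take $y:=\gamma(s)$ for any $s$ with $a\le s\le\phi(a)$ (when $\phi(a)-a=O(1)$ one has $d(x,gx)=O(1)$, so the statement is immediate with $y$ near $x$). Put $p:=\gamma(\phi(a))$ and $q:=\gamma(\phi(s))$, so $d(p,gx)\le 2R$ and $d(q,gy)\le 2R$. Then $x,y,p$ lie in this order on $\gamma$, whence
\[
\langle x,gx\rangle_y\ \le\ \langle x,p\rangle_y+d(p,gx)\ \le\ 3R .
\]
Since $a\le s$, coarse monotonicity gives $\phi(a)\le\phi(s)+O(1)$, so $y,p,q$ lie on $\gamma$ in this order up to a bounded error, whence
\[
\langle y,gy\rangle_{gx}\ \le\ \langle y,q\rangle_{p}+d(p,gx)+d(q,gy)\ \le\ 5R+O(1) .
\]
Taking $C$ larger than both right-hand sides completes the proof; the only non-routine ingredient is the coarse monotonicity of $\phi$, everything else being the Morse lemma together with the triangle inequality for Gromov products.
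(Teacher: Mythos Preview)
The paper states this lemma without proof, so there is no argument in the paper to compare against.

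One observation worth making: as literally stated the lemma is satisfied trivially by $y=x$, since $\langle x,gx\rangle_x=0$ and $\langle x,gx\rangle_{gx}=0$. The way the lemma is invoked in the proof of Lemma~\ref{QuasiAxisLengthLem} (where a \emph{given} second point is brought, ``up to translation'', between $o$ and $go$) indicates that the intended content is stronger: every $y\in\gamma$ lying between $x$ and the projection of $gx$ to $\gamma$ satisfies both Gromov-product bounds. That stronger statement is exactly what your argument proves, so you are establishing the thing the paper actually uses.

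Your outline is sound. The Morse input and the implication ``$p,q,r$ in order on $\gamma$ $\Rightarrow$ $\langle p,r\rangle_q\le R$'' are routine, and the Lipschitz estimates $\lvert\langle x,gx\rangle_y-\langle x,p\rangle_y\rvert\le d(p,gx)$, $\lvert\langle y,gy\rangle_{gx}-\langle y,q\rangle_{p}\rvert\le d(p,gx)+d(q,gy)$ you use are correct. The coarse monotonicity of $\phi$ that you flag is indeed the only substantive point; it holds because $g\gamma(t)=g\cdot\gamma(t)$ inherits the \emph{same} $(\lambda,c)$-parametrization as $\gamma$, so $\phi$ is a quasi-isometry of $\mathbb R$, and orientation preservation excludes the coarsely decreasing case.

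One small wrinkle: reversing the orientation of $\gamma$ does not by itself force $\phi(a)\ge a$ for every $a$ (a coarsely increasing quasi-isometry of $\mathbb R$ need not satisfy $\phi(t)\ge t-K$ uniformly). This is harmless, though: for a fixed $a$ you only need the interval between $a$ and $\phi(a)$ to be nonempty, and your two estimates go through symmetrically when $\phi(a)\le a$, since then $q,p,y$ occur on $\gamma$ in that order up to $O(1)$ by the same monotonicity (from $s\le a$ one gets $\phi(s)\le\phi(a)+O(1)$). So the argument is complete either way.
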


The following lemma is well-known with  proof included for completeness.
\begin{lem}\label{QuasiAxisLengthLem} 
There exists a constant $C=C(\lambda,c,\delta)$ for any $\lambda,c>0$ with the following property. If  a loxodromic element $g$ admits a  $(\lambda, c)$-quasi-axis $\gamma$, then for any $x\in \gamma$, we have $|\ell_X(g)-d(x, gx)|\le C$.
\end{lem}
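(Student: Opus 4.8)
The plan is to exploit that $g$ preserves the orientation of $\gamma$ together with the fact, recorded in Lemma \ref{OrientationLem}, that for every $x\in\gamma$ there is a companion point $y\in\gamma$ with $\langle x,gx\rangle_y < C_0$ and $\langle y,gy\rangle_{gx} < C_0$ for a constant $C_0 = C_0(\lambda,c,\delta)$. The quantity $\ell_X(g)$ is the stable (asymptotic) behaviour of $g$, so by Lemma \ref{TransLengthLem} it suffices to compare $d(x,gx)$ with the translation length $\tau(g)$ up to an additive constant, and then absorb the $16\delta$.

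First I would set up a local-geodesic estimate along the $g$-orbit of a well-chosen point. Fix $x\in\gamma$ and let $y\in\gamma$ be the point supplied by Lemma \ref{OrientationLem}. The key observation is that consecutive points of the sequence $y, gx, gy, g^2x, g^2 y,\dots$ (obtained by applying powers of $g$ to the pair $\{y, gx\}$, noting $g$ moves $x$ to $gx$ and $y$ to a point whose Gromov products with $gx$ are controlled) have uniformly small Gromov products at the intermediate vertices: this is exactly the content of the two inequalities $\langle x,gx\rangle_y<C_0$ and $\langle y,gy\rangle_{gx}<C_0$, translated along the orbit by the isometry $g$. Once the side lengths $d(x,gx)$ and $d(y,gx)$ are bounded below by something like $4(C_0+\delta)$ — which happens automatically once $\ell_X(g)$, hence $d(x,gx)$ for $x$ on a quasi-axis, is large enough (a case distinction handles the bounded case trivially, since then $d(x,gx)$ and $\tau(g)$ are both $O(1)$) — Corollary \ref{LocalGeodCor} applies to the chain $x_1 = y$, $x_2 = gx$, $x_3 = gy$, $x_4 = g^2x,\dots$, giving
$$
d\big(y, g^n x\big)\ \ge\ \frac12\sum_{i} d(x_i,x_{i+1})\ \ge\ \frac{n}{2}\,\big(d(x,gx)-2C_0\big)
$$
for all $n$, and in particular $\tau(g)\ge \tfrac12 d(x,gx) - C_0$ after dividing by $n$ and letting $n\to\infty$. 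For the reverse inequality one uses instead that a quasi-axis has finite Hausdorff distance from its $g$-translate, so $d(x,g^nx)\le n\,d(x,gx) + O(1)$ by the triangle inequality along $x, gx, g^2x,\dots$, whence $\tau(g)\le d(x,gx)$; combined with $|\ell_X(g)-\tau(g)|\le 16\delta$ this pins $\ell_X(g)$ within an additive constant of $d(x,gx)$.

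Actually the cleaner route, avoiding the weak factor $\tfrac12$, is to note that once the local Gromov products are $<C_0$ the chain $x, gx, g^2x,\dots$ along $\gamma$ is already an unparametrised quasi-geodesic fellow-travelling $\gamma$, so $d(x,g^nx) = n\,d(x,gx) + O(1)$ directly, with the implied constant depending only on $\lambda,c,\delta,C_0$; dividing by $n$ gives $\tau(g)=d(x,gx)+o(1)$ as $n\to\infty$, i.e. $|\tau(g)-d(x,gx)|\le C_1(\lambda,c,\delta)$, and then $|\ell_X(g)-d(x,gx)|\le C_1+16\delta =: C$ by Lemma \ref{TransLengthLem}. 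The main obstacle is purely bookkeeping: verifying that Lemma \ref{OrientationLem} really does deliver \emph{uniformly} small Gromov products at \emph{every} intermediate vertex of the orbit chain, not just at one vertex, and that the resulting lower bound $d(x,gx)\ge 4(C_0+\delta)$ needed to invoke Corollary \ref{LocalGeodCor} can be assumed without loss of generality — the complementary regime, where $d(x,gx)$ is bounded by this explicit constant, forces $\ell_X(g)$ and $\tau(g)$ to be bounded as well, so the inequality $|\ell_X(g)-d(x,gx)|\le C$ holds after enlarging $C$.
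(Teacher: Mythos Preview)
Your route through the asymptotic translation length $\tau(g)$ and Lemma \ref{TransLengthLem} is genuinely different from the paper's argument, and the overall strategy is sound. The paper never touches $\tau(g)$: instead it first makes $\gamma$ $\langle g\rangle$-invariant, then uses the $C$-contracting property of the nearest-point projection $\pi_\gamma$ to show that any point $o$ realising $\ell_X(g)=d(o,go)$ lies within a bounded distance of $\gamma$, and finally compares $d(o,go)$ with $d(x,gx)$ for two points $o,x\in\gamma$ directly via Lemma \ref{OrientationLem}. That last step is a four-line computation. So the paper trades your asymptotic limit for a single application of the contracting projection; its argument is shorter and never needs to control an infinite orbit chain.

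There is, however, a real error in your ``cleaner route''. The assertion $d(x,g^nx)=n\,d(x,gx)+O(1)$ is false, not merely imprecise: if it held, dividing by $n$ would give $\tau(g)=d(x,gx)$ \emph{exactly} for every $x\in\gamma$, forcing $d(x,gx)$ to be constant along a quasi-axis --- which it is not unless $\gamma$ is a genuine geodesic. The most you can get is $d(x,g^nx)\ge n\,d(x,gx)-Kn$ for a uniform $K$, which still yields the needed $\tau(g)\ge d(x,gx)-K$, but ``fellow-travelling $\gamma$'' alone does not deliver even this: being an unparametrised quasi-geodesic gives only multiplicative control. A correct justification would project the orbit $\{g^kx\}$ to a geodesic $\bar\gamma$ at Hausdorff distance $\le D$ from $\gamma$, argue that the projections land in linear order (this uses that $g$ is loxodromic and preserves orientation, and needs $d(x,gx)$ large relative to $D$), and then exploit additivity of distances along $\bar\gamma$. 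Alternatively you could run Lemma \ref{LocalGeodLem} itself (not the Corollary) on your alternating chain $y,gx,gy,g^2x,\dots$ to get
\[
d(y,g^ny)\ \ge\ n\big(d(y,gx)+d(x,y)\big)-2(2n-1)(C_0+\delta)\ \ge\ n\,d(x,gx)-4n(C_0+\delta),
\]
but the hypothesis of that lemma requires \emph{both} side lengths $d(y,gx)$ and $d(x,y)$ to exceed $2C_0+3\delta$, and Lemma \ref{OrientationLem} gives no lower bound on $d(x,y)$. The small-$d(x,y)$ case needs a separate argument. None of this is fatal, but it is more than bookkeeping.
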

\begin{proof}
By Morse Lemma, any two $(\lambda, c)$-quasi-axes $\gamma, g\gamma$ have bounded Hausdorff distance depending only on $\lambda, c, \delta$. Thus, the inclusion of $\gamma$ into $\cup_{i\in \mathbb Z} g^i\gamma$ is a quasi-isometry with constants depending $\lambda, c, \delta$ only. We can thus assume that the quasi-axis $\gamma$ is $\langle g\rangle$-invariant. 

Note that the shortest projection $\pi_\gamma(\cdot)$ to a $(\lambda,c)$-quasi-geodesic $\gamma$ is $C$-contracting for a constant $C=C(\lambda, c, \delta)$:  
$$(\forall z,w \in X,\; diam(\{\pi_\gamma(z),\pi_\gamma(w)\})>C) \Longrightarrow (\max\{d(\pi_\gamma(z),\gamma), d(\pi_\gamma(w),\gamma)\}\le C).$$

We then derive the following for any $z,w\in X$: $$diam(\{\pi_\gamma(z),\pi_{g\gamma}(w)\})+d(z, \pi_\gamma(z))+d(w,\pi_\gamma(w)) \le d(z,w)+4C.$$  
Let $o\in X$ so that $d(o, go)=\ell_X(g)$. We apply the above inequality for $z=o, w=go$. Using the fact that $\gamma$ is $\langle g\rangle$-invariant, we obtain $d(o, \gamma)\le 2C$: indeed, if not,  we would have $d(\pi_\gamma(o), g\pi_\gamma(o))<d(o, go)$. Thus, by taking its projection $\pi_\gamma(o)$, it suffices to prove the conclusion by assuming that  $o$ lies on $\gamma.$

Let $x\in \gamma$. By Lemma \ref{OrientationLem}, we can assume up to translation  that $\langle o, go\rangle_x\le C$ and then $\langle x, gx\rangle_{go}\le C$. Thus, $d(x, [o,go])\le C$ and $d(go, [x,gx])\le C$. Consequently,   
\begin{align*}
|d(o, go)-d(x, gx)| &\le |d(o,x)+d(x,go)-d(x,go)-d(go,gx)|+4C\\
&\le 4C.
\end{align*}
The proof is complete.
\end{proof}

\subsection{Elementary subgroups}\label{ElemSection}

Recall that $G$ acts properly on a proper hyperbolic space $X$. The \textit{limit set $\Lambda H$} of a subgroup $H$ is the set of accumulation points in $\partial X$ of any $H$-orbit in $X$. A subgroup $H$ in $G$ is called \textit{elementary} if its limit set contains at most two points. See \cite{Bow1} for relevant discussion.

If $\Lambda H$ consists of only one point $p$, then $H$ is called \textit{parabolic subgroup} and $p$ is called a \textit{parabolic point}. It is a well-known fact that in a convergence group action, a loxodromic element cannot   fix a parabolic point. The (maximal) parabolic group plays the key role in definition \ref{RHdefn} of relatively hyperbolic groups given in the next subsection. In the remainder of this subsection, we first consider the elementary subgroup with exactly two limit points.


Let $\gamma$ be a quasi-axis for a loxodromic element $h$.  The coarse stabilizer of the axis defined as follows
$$E(h)=\{g\in G: \exists r>0, \gamma\subset N_r(g\gamma), g\gamma\subset N_r(\gamma)\}$$
gives the maximal elementary subgroup containing $h$.
Note that the following index at most 2 subgroup   $$E^+(h):=\{g\in G:  \exists n > 0,\;  gh^ng^{-1}=h^n\}$$ is precisely the set of orientation-preserving elements in $E(h)$. 
 
Denote $E^-(h)=E(h)\setminus E^+(h)$.
Let $E^\star(h)$ be the torsion group of $E^+(h)$.

\begin{lem}\label{elementarygroup}
 For a loxodromic element $h$,  the following statements hold:
\begin{enumerate}
\item
$[E(h): \langle h \rangle] <\infty$, and $E(h)$ is a contracting subgroup with bounded intersection. 
\item
$
E(h)=\{g\in G: \exists n > 0,\; (gh^ng^{-1}=h^n)\; \lor\;  (gh^ng^{-1}=h^{-n})\}.
$
\item
$E^\star(h)$ is a finite normal subgroup of $E(g)$.
\item
$g^2\in E^\star(h)$ for any $g\in E^-(h)$.
\end{enumerate}
\end{lem}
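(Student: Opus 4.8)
The plan is to prove the four statements about the maximal elementary subgroup $E(h)$ of a loxodromic element $h$ by combining the standard theory of contracting subgroups with the convergence-group dynamics on $\partial X$. Throughout, let $\gamma$ be a $(\lambda,c)$-quasi-axis for $h$ with endpoints $h^{\pm}\in\partial X$, so $\Lambda\langle h\rangle=\{h^+,h^-\}$ and, by Morse's lemma, all the quasi-axes $g\gamma$ appearing below stay within bounded Hausdorff distance of honest geodesics between their endpoints.

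\textbf{Part (1).} First I would show $[E(h):\langle h\rangle]<\infty$. If $g\in E(h)$ then $g\gamma$ lies in a bounded neighborhood of $\gamma$, hence $g$ fixes the pair $\{h^+,h^-\}$ setwise; passing to the index-$\le 2$ subgroup fixing $h^+$ and $h^-$ pointwise, such $g$ commutes coarsely with the loxodromic dynamics on $\gamma$, so by Lemma \ref{QuasiAxisLengthLem} and Lemma \ref{TransLengthLem} its translation along $\gamma$ is, up to bounded error, an integer multiple of $\tau(h)$; a coset argument using properness of the $G$-action on $X$ (only finitely many group elements move a basepoint a bounded amount, and the "coarse translation mod $\tau(h)$" takes finitely many values) then gives finite index. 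That $E(h)$ is contracting with bounded intersection is a standard consequence: $E(h)$ is quasi-isometrically embedded (it contains $\langle h\rangle$ with finite index and is itself coarsely $\gamma$), so its orbit is a contracting quasi-geodesic by the contraction property recorded in the proof of Lemma \ref{QuasiAxisLengthLem}; bounded intersection of distinct translates $g_1E(h)$, $g_2E(h)$ follows from the fact that two contracting quasi-geodesics with distinct endpoint pairs fellow-travel for a bounded time.

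\textbf{Parts (2)--(4).} For (2), the inclusion $\supseteq$ is clear since $gh^ng^{-1}=h^{\pm n}$ forces $g\{h^+,h^-\}=\{h^\pm,h^\mp\}$, hence $g$ coarsely preserves the quasi-axis. For $\subseteq$, take $g\in E(h)$; by (1), $ghg^{-1}\in E(h)$ is loxodromic sharing a quasi-axis with $h$, and both powers $gh^Ng^{-1}$ and $h^{\pm N}$ (for suitable sign and large $N$ divisible by $[E(h):\langle h\rangle]$ and killing torsion) lie in $\langle h\rangle$; matching translation lengths and orientation via Lemma \ref{TransLengthLem} and Lemma \ref{QuasiAxisLengthLem} pins down $gh^Ng^{-1}=h^{\pm N}$, with sign $+$ exactly when $g\in E^+(h)$. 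For (3), $E^\star(h)$ is the kernel of the homomorphism $E^+(h)\to\mathbb R$ sending $g$ to its (well-defined on $E^+(h)$) signed coarse translation length along $\gamma$, which is injective modulo torsion by the displacement estimates; hence $E^\star(h)=$ torsion of $E^+(h)$ is a subgroup, it is finite by properness (its elements coarsely fix $\gamma$ pointwise and fix a basepoint up to bounded error), and it is normal in $E(h)$ since conjugation by any element of $E(h)$ preserves both "being torsion" and "lying in $E^+(h)$" (the latter because $E^+(h)\trianglelefteq E(h)$ with index $\le 2$). For (4), if $g\in E^-(h)$ then $g$ reverses the orientation of $\gamma$, so $g^2$ preserves it, i.e. $g^2\in E^+(h)$; moreover $g$ maps $h^+\mapsto h^-$ and $h^-\mapsto h^+$, so $g^2$ fixes both endpoints, and a short computation with coarse translation lengths shows $g$ conjugates the translation to its negative, forcing $g^2$ to have zero coarse translation length, hence $g^2\in E^\star(h)$.

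I expect the main obstacle to be making the phrase "coarse translation length along $\gamma$" into a genuine homomorphism $E^+(h)\to\mathbb R$ with controlled defect, and then leveraging properness of the action to conclude finiteness of the relevant kernels and cosets; once that bookkeeping is in place (using Lemmas \ref{TransLengthLem}, \ref{QuasiAxisLengthLem}, and \ref{OrientationLem} to keep all the fellow-traveling constants uniform in $\lambda,c,\delta$), parts (2)--(4) are essentially formal consequences of the dynamics on the two-point limit set.
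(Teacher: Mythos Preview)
Your sketch is essentially correct and follows the standard route via the convergence action on the two-point limit set $\{h^+,h^-\}$ together with the virtually-cyclic structure of $E(h)$. Note, however, that the paper does not actually prove this lemma: it simply cites \cite[Lemma 2.11]{YANG10} for (1)--(2) and \cite[Lemma 4]{AL06} for (3)--(4), remarking that the latter argument uses only assertion (1) as input. So your proposal is considerably more detailed than what appears here, and it tracks (as far as one can tell) the content of those cited references.

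Two minor points worth tightening. In Part (1), the claim that the translation of $g\in E^+(h)$ along $\gamma$ is ``up to bounded error an integer multiple of $\tau(h)$'' is neither needed nor correct as stated; what actually gives finite index is the coset argument you state immediately afterwards (each $\langle h\rangle$-coset in $E^+(h)$ has a representative moving a basepoint on $\gamma$ by at most roughly $\tau(h)+O(\delta)$, and properness of the action bounds the number of such elements). For Parts (3)--(4), rather than building a signed coarse translation-length homomorphism with controlled defect, it is cleaner to use (1) directly: once $[E(h):\langle h\rangle]<\infty$, the group $E^+(h)$ is finite-by-$\mathbb Z$, so its torsion is a finite normal subgroup $E^\star(h)$ with $E^+(h)/E^\star(h)\cong\mathbb Z$; since $E^\star(h)$ is characteristic in $E^+(h)\trianglelefteq E(h)$ it is normal in $E(h)$, and $E(h)/E^\star(h)$ is $\mathbb Z$ or $D_\infty$. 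In the $D_\infty$ case every element outside the $\mathbb Z$-subgroup is an involution, which gives (4) immediately.
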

\begin{proof}
The first two statements are \cite[Lemma 2.11]{YANG10}.
The last two statements follow from \cite[Lemma 4]{AL06} where only the assertion (1) is used in the proof.
\end{proof}

\begin{lem}\label{ShortEllipticLem}
Let $h\in G$ be a loxodromic element admiting a $(\lambda,c)$-quasi-axis $\alpha$. Then there exists $D=D(\lambda,c, \delta)$ such that $d(go, o)\le D$ for any $g\in E^\star(h)$ and $o\in \alpha$.
\end{lem}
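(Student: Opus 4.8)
The plan is to exploit that $E^\star(h)$ is a \emph{finite} group (Lemma \ref{elementarygroup}(3)) whose elements nearly fix the quasi-axis $\alpha$ of $h$, together with the rigidity of quasi-geodesics. First I would fix a basepoint $o\in\alpha$ and let $g\in E^\star(h)$. Since $g\in E^+(h)$, the element $g$ preserves the orientation of $\alpha$, so $g\alpha$ and $\alpha$ are $(\lambda,c)$-quasi-axes at bounded Hausdorff distance $R=R(\lambda,c,\delta)$ by the Morse Lemma; thus $d(go,\alpha)\le R$. The key point is to bound the displacement \emph{along} $\alpha$. Because $g$ has finite order, say $g^k=1$, the translation length $\tau(g)=0$, so by Lemma \ref{TransLengthLem} (applied with $g$ viewed through its action, using that $\ell_X$ and $\tau$ differ by $\le 16\delta$ — or more directly since $g^k=1$ forces $d(o,g^no)$ bounded independently of $n$) we get that $g$ cannot translate points of $\alpha$ by a large amount.

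More precisely, here is the step I would carry out in detail. Project: let $o'=\pi_\alpha(go)$, so $d(go,o')\le R$ and hence $d(o,go)\le d(o,o')+R$; it remains to bound $d(o,o')$ where both $o,o'\in\alpha$. Now iterate: for each $n$, the point $g^n o$ lies within $R$ of $\alpha$, and since $g$ preserves orientation and moves $o$ in a definite direction along $\alpha$ by an amount comparable to $d(o,o')$ (up to the quasi-geodesic and hyperbolicity constants — this uses Lemma \ref{OrientationLem} to align the Gromov products), the points $o, go, g^2o,\dots$ march monotonically along $\alpha$ with consecutive gaps $\asymp d(o,o')$. By Corollary \ref{LocalGeodCor} (or directly Lemma \ref{LocalGeodLem}), if $d(o,o')$ were larger than some threshold $D_0=D_0(\lambda,c,\delta)$ the concatenation of the geodesics $[g^io,g^{i+1}o]$ would be a quasi-geodesic, forcing $d(o,g^k o)\ge \tfrac12\sum_{i=0}^{k-1}d(g^io,g^{i+1}o)\ge \tfrac12 k\, d(o,o')$ to grow without bound in $k$; but $g^k=1$ gives $d(o,g^ko)=0$, a contradiction. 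Hence $d(o,o')\le D_0$ and so $d(go,o)\le D_0+R=:D$, with $D$ depending only on $\lambda,c,\delta$. Finally, for an arbitrary $o\in\alpha$ (not just a preferred one), the same argument applies verbatim since $\alpha$ is the quasi-axis throughout.

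The main obstacle I anticipate is making rigorous the claim that $g$ moves points of $\alpha$ \emph{monotonically by a definite signed amount}, i.e.\ that small per-step displacement is impossible without the displacement genuinely accumulating — one must rule out the scenario where $g o$ is far from $o$ along $\alpha$ but $g^2o$ comes back near $o$. This is exactly where orientation-preservation (Lemma \ref{OrientationLem}) is essential: it guarantees the Gromov products $\langle o, g^2 o\rangle_{go}$ are small, so the triple $o, go, g^2o$ (and inductively the whole orbit) satisfies the local-geodesic hypothesis of Corollary \ref{LocalGeodCor}, which converts per-step displacement into total displacement linearly in $k$. The finiteness of $E^\star(h)$ then provides the contradiction. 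Everything else is a bookkeeping exercise in tracking the constants $\lambda,c,\delta$ through the Morse Lemma and the local-to-global quasi-geodesic estimate.
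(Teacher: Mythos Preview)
Your proposal is correct and shares the decisive observation with the paper's proof: orientation preservation forces $go$ and $g^{-1}o$ to land on opposite sides of $o$ along $\alpha$, so that the Gromov product $\langle g^{-1}o,go\rangle_o=\langle o,g^2o\rangle_{go}$ is uniformly bounded in terms of $\lambda,c,\delta$. Where you diverge is in how you extract the contradiction. You iterate, feeding the whole orbit $o,go,g^2o,\dots$ into Corollary~\ref{LocalGeodCor} to get $d(o,g^ko)\gtrsim k\cdot d(o,go)$, and then invoke the finite order of $g\in E^\star(h)$ to reach $0=d(o,g^ko)>0$. The paper instead stops after one step: once $\langle o,g^2o\rangle_{go}$ is small and $d(o,go)$ is assumed large, Lemma~\ref{LoxoCriterionLem} immediately declares $g$ loxodromic, contradicting that $g$ is torsion. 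This buys the paper a shorter argument (no iteration, no appeal to finiteness of $E^\star(h)$, only to the fact that torsion elements are not loxodromic), while your route is self-contained in the sense that it avoids the loxodromicity criterion. One small caution: Lemma~\ref{OrientationLem} as stated produces an auxiliary point $y\in\gamma$ with small products at $y$ and $gx$, not directly the bound on $\langle o,g^2o\rangle_{go}$ for your chosen $o$; the clean way to get what you need is the paper's explicit ``opposite sides'' argument (project $go$ and $g^{-1}o$ to $\alpha$ and use that $g$ fixes the two ends of $\alpha$ individually), which you should spell out rather than citing Lemma~\ref{OrientationLem}.
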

\begin{proof}
By Morse Lemma, there exists a constant $D>3\delta$ depending only on $\lambda, c, \delta$ such that any two of $g\alpha, \alpha, g^{-1}\alpha$ have Hausdroff distance at most $D$. Since $g\in E^\star(h)\subset E^+(h)$ preserves the orientation of $\alpha$, up to taking inverse of $g$, we can assume further that $d(go, [o,\alpha_+]_\alpha)\le D$ and $d(g^{-1}o, [o,\alpha_-]_\alpha)\le D$. Let $x, y\in \alpha$ such that $d(go, x), d(g^{-1}o, y)\le D$. Since $x, y$ are on the opposite sides of $o$ on $\alpha$, we have $d(o, [x,y])\le D$. Thus, $\langle x, y\rangle_o\le D$ and then $\langle go, g^{-1}o\rangle_o\le 3D$.  If $d(o,go)>4D$ was assumed,  then $d(o,go)\ge   2d(o, g^2o)+6\delta.$ By Lemma \ref{LoxoCriterionLem}, $g$ is loxodromic. This is a contradiction, so $d(o,go)\le 4D$.
\end{proof}

\subsection{Relatively hyperbolic groups}
The notion of  a relatively hyperbolicity has a number of equivalent formulation (see \cite{Farb}, \cite{Bow1}, \cite{Osin}, \cite{DruSapir}, \cite{Ger} etc). See \cite{Hru} for a survey of their equivalence.  In this paper we define a relatively hyperbolic group which admits a cusp-uniform action on a hyperbolic space.
\begin{defn}\label{RHdefn}
Suppose $G$ admits a proper and isometric action on a proper
hyperbolic space $(X, d)$ such that $G$ does not fix a point in the Gromov boundary $\pX$. Denote by $\mathcal P$ the set of
maximal parabolic subgroups in $G$. Assume that there is a
$G$-invariant system of disjoint (open) horoballs $\mathbb U$ centered at
parabolic points of $G$ such that the action of $G$ on the complement called \textit{neutered space}
$$X(\mathbb U):=X \setminus \mathcal U$$
is co-compact where $\mathcal U := \bigcup_{U \in \mathbb U} U$. Then the pair $\GP$ is said to be \textit{relatively
hyperbolic}, and the action of $G$ on $X$ is called
\textit{cusp-uniform}.
\end{defn}

We fix a $G$-invariant system $\mathbb U$ of horoballs and a neutered space $X(\mathbb U)$ on which $G$ acts co-compactly. The following result is proved by \cite[Lemma 6]{AL06} in hyperbolic groups. In the relative case, we follow closely their arguments.

\begin{lem}\label{LinearBILem}
Let $h$ be a loxodromic element in $G$ so that for some point $o\in X$ and $\lambda, c>0$,  the path  $\alpha=\cup_{n\in \mathbb Z} [h^no, h^{n+1}o]$ is a $(\lambda, c)$-quasi-geodesic  in $X$.  Then for any given $\theta>0$  there exists $N=N(\lambda, c,\delta,  \theta)$ independent of the point $o$ such that for any $f\notin E(h)$,  we have $$\diam{\alpha\cap N_R(f\alpha)}\le N\cdot  d(o, ho)$$
where $R:=\theta\cdot d(o, ho).$
\end{lem}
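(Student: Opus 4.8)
The plan is to exploit the contracting property of the quasi-axis $\alpha$ of $h$ (Lemma~\ref{elementarygroup}(1), which gives that $E(h)$, and hence $\alpha$, is a contracting subset with bounded intersection) together with the fact that a long overlap $\alpha \cap N_R(f\alpha)$ forces $\alpha$ and $f\alpha$ to fellow-travel along a long subsegment, and this in turn forces $f$ to nearly conjugate a power of $h$ to a power of $h$, contradicting $f \notin E(h)$ by Lemma~\ref{elementarygroup}(2). First I would fix the Morse constant $\mu = \mu(\lambda,c,\delta)$ so that any two $(\lambda,c)$-quasi-geodesics with the same endpoints (or endpoints at bounded distance) stay within $\mu$ of one another, and I would normalize: since $f\alpha$ is the quasi-axis of $fhf^{-1}$ with the same quasi-geodesic constants, the overlap $\alpha \cap N_R(f\alpha)$ is (up to an additive $O(R)$ error absorbed into the final constant $N$) a genuine fellow-travelling segment of the two axes.

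The core step is a pigeonhole/translation argument along $\alpha$. Write $L := d(o,ho)$. Suppose $\diam{\alpha \cap N_R(f\alpha)} > N \cdot L$ for $N$ to be chosen. Parametrize $\alpha$ by the orbit points $h^n o$; consecutive ones are at distance $L$, so the overlap contains orbit points $h^{n}o, h^{n+1}o, \dots, h^{n+m}o$ with $m \gtrsim N$ (after discarding the $O(R) = O(\theta L)$ fringe at each end). Each $h^{n+j}o$ lies within $R + O(\mu)$ of $f\alpha$; since $f\alpha$ is also a $(\lambda,c)$-quasi-axis with orbit points $f h^k o$ spaced $L$ apart, each $h^{n+j}o$ is within $R + O(\mu + L)$ of some $f h^{k_j} o$, and the indices $k_j$ increase (because $f$ either preserves or reverses orientation; replacing $f$ by an adjustment and possibly inverting, assume they increase by matching the two orientations). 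Now consider the element $g_j := h^{-(n+j)} f h^{k_j}$: it moves $o$ a bounded distance, namely $d(o, g_j o) = d(h^{n+j}o, f h^{k_j} o) \le R + O(\mu + L) = O((\theta+1)L + \mu)$. Since $G$ acts properly on $X$, the number of elements moving $o$ by at most a fixed bound $B = B(\theta,\lambda,c,\delta)\cdot L$ — wait, this bound grows with $L$, so properness alone is not enough; here is where the co-compact action on the \emph{neutered space} and the geometry must enter.

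The subtlety just flagged is the main obstacle, and it is exactly why the relative case is harder than \cite[Lemma~6]{AL06}: one cannot simply invoke properness to bound the number of the $g_j$, because the ball radius scales with $L = d(o,ho)$. The resolution I would pursue: if there are more than $N$ distinct elements $g_j = h^{-(n+j)} f h^{k_j}$, then by pigeonhole two of them, say $g_j$ and $g_{j'}$ with $j < j'$, satisfy $d(g_j o, g_{j'} o)$ \emph{small compared to $L$} — indeed the points $g_j o$ all lie in a bounded-diameter region but there are $\gtrsim N$ of them, and one shows via a second pigeonhole on the ``fractional parts'' that two can be taken within, say, distance $\delta$ of each other once $N$ is large enough relative to the volume of an $R$-ball in the neutered space measured in $h$-translates. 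Then $g_j^{-1} g_{j'} = h^{-(k_j)} f^{-1} h^{(n+j) - (n+j')} f h^{k_{j'}} $ — more usefully, $g_{j'} g_j^{-1} = h^{-(n+j')} f h^{k_{j'} - k_j} f^{-1} h^{n+j}$ is an element moving $o$ a tiny amount, hence is elliptic with small displacement, and chasing this identity gives $f h^{p} f^{-1} = h^{q} \cdot(\text{bounded-displacement element})$ for $p = k_{j'}-k_j > 0$ and $q = j' - j > 0$; feeding this into Lemma~\ref{elementarygroup}(2) (together with Lemma~\ref{ShortEllipticLem} to absorb the finite error into $E^\star(h)$, after passing to a further power) forces $f \in E(h)$, the desired contradiction. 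So the outline is: (i) reduce the overlap to fellow-travelling of quasi-axes via Morse; (ii) extract $\gtrsim N$ ``return'' elements $g_j$ of displacement $O(\theta L)$; (iii) apply a volume/pigeonhole estimate — crucially using cocompactness on $X(\mathbb U)$ so that an $O(\theta L)$-ball is covered by $O_\theta(1)$ translates of a fixed fundamental domain, independently of where along $\alpha$ (in horoball or not) we are — to find two $g_j$'s that are close; (iv) deduce a near-conjugacy relation $f h^p f^{-1} \doteq h^q$ and conclude $f \in E(h)$ via Lemmas~\ref{elementarygroup} and \ref{ShortEllipticLem}. Step (iii), establishing the uniform covering estimate that survives the cusps, is where I expect the real work of adapting Arzhantseva--Lysenok to the relatively hyperbolic setting to lie.
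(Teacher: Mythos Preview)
You correctly locate the heart of the matter: the displacement bound on your return elements $g_j=h^{-(n+j)}fh^{k_j}$ scales like $(\theta+1)L$, and properness alone cannot close the argument. However, your proposed resolution in step~(iii) does not work. The claim that an $O(\theta L)$-ball is covered by $O_\theta(1)$ translates of a fundamental domain, \emph{independently of $L$}, is false: even on the neutered space $X(\mathbb U)$ with the cocompact action, the number of group elements moving a point by at most $CL$ grows (at least polynomially, typically exponentially) in $L$. So the ``second pigeonhole on fractional parts'' never fires, and the argument stalls exactly where you flagged it.

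The paper's proof avoids this by manufacturing return elements with a \emph{uniform} (not $O(L)$) displacement bound. Two ideas accomplish this. First, after discarding the $R$-fringe, the thin-quadrilateral property gives subpaths $\beta_1\subset\alpha$ and $\beta_2\subset\alpha$ with $d_H(\beta_1,f\beta_2)\le C=C(\lambda,c,\delta)$, a constant independent of $R$ and $L$; your estimate ``within $R+O(\mu)$'' is too generous here. Second---and this is the key trick you are missing---one picks a single pair $x\in\beta_1$, $y\in\beta_2$ with $d(x,fy)\le C$, and then compares $h^ix$ with $fh^iy$ using the \emph{same} exponent $i$ on both sides. Since both $x$ and $y$ lie on the quasi-axis $\alpha$ of $h$, Lemmas~\ref{QuasiAxisLengthLem} and~\ref{TransLengthLem} give $|d(x,h^ix)-d(y,h^iy)|\le 2C+32\delta$, whence $d(h^ix,fh^iy)\le 3C+32\delta$ for all $i$ in range. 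The pigeonhole is then run on the elements $h^{-i}fh^{i}$, whose displacement of $x$ is bounded by a constant depending only on $\lambda,c,\delta$. Finally, one arranges $x\in\alpha\cap X(\mathbb U)$ (possible because a loxodromic axis cannot stay in a single horoball), and cocompactness on $N_C(X(\mathbb U))$ bounds $\sharp\{g\in G: d(x,gy)\le 3C+32\delta\}$ by some $N_0$ uniformly. Taking $N$ large enough that $\beta_1$ contains more than $N_0$ orbit points forces $h^{-i}fh^i=h^{-j}fh^j$ for some $i\ne j$, i.e.\ $f$ commutes with $h^{i-j}$, so $f\in E(h)$.

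In short: do not round to the nearest orbit point on $f\alpha$ (which costs you $O(L)$); instead use the translation-length-on-axis lemmas to synchronise the $h$-action on both copies of $\alpha$ and collapse the displacement bound to $O_\delta(1)$.
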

\begin{proof}
First of all, since $h$ is a loxodromic element and cannot fix any parabolic point, we obtain that $\alpha$ cannot be contained inside any horoball $U\in\mathbb U$. Thus, the $\langle h\rangle $-invariant set $\alpha\cap X(\mathbb U)$ is a non-empty unbounded set. Namely, for any $x\in \alpha\cap X(\mathbb U)$ and any $i\in \mathbb Z$,  we have  $h^ix\in \alpha$.

We argue by contradiction.  Assume that $\diam{\alpha\cap N_R(f\alpha)}> N\cdot  d(o, ho)$ for a constant $N$ determined below.  Let $z, w,z',w'\in \alpha$ such that $d(z,w)=\diam{\alpha\cap N_R(f\alpha)}$ and $d(z,fz'),\; d(w,fw')\le R$.

By hyperbolicity, $[z,w]_\alpha$ and $[z',w']_{\alpha}$ contain subpaths $\beta_1, \beta_2$ respectively such that $\beta_1, f\beta_2$ have Hausdorff distance at most $C=C(\lambda, c, \delta)>0$ and for $i=1,2$, we have  $$\diam{\beta_i}\ge \diam{\alpha\cap N_R(f\alpha)}-2R\ge (N-\theta)d(o,ho).$$ Since $\alpha$ is a $(\lambda, c)$-quasi-geodesic, there exists a monotone increasing function $N'=N'(\lambda, c, N, \theta)>0$ such that $\beta_1$ contains at least $(N'+1)$ translates of $[o,ho]$.  Moreover, $N'=N'(\lambda, c, N, \theta)\to\infty$ as $N\to\infty$. Thus, $\beta_1$ contains $(N'+1)$ points $x, hx, \cdots, h^{N'}x\in X(\mathbb U)$.  Let $y\in \beta_2$ be a point so that $d(x,fy )\le C$.

Assume that $C$ also satisfies the conclusion of Lemma \ref{QuasiAxisLengthLem}. With Lemma \ref{TransLengthLem},  for $1\le i\le N'$,  we have $$|d(x, h^ix) - \tau(h^i)|\le C+16\delta,\quad |d(fy, fh^iy) -\tau(h^i)|\le C+16\delta$$ so  $$|d(x, h^ix)-d(fy, fh^i y)| \le 2C+32\delta$$ Thus,   $d(h^ix, fh^{i}y)\le 3C+32\delta$ for each $1\le i\le N'$. 
 

Set $N(x,y)=\sharp \{g\in G: d(x, gy)\le C\}+1$. Since $G$ acts cocompactly on the $C$-neighborhood of $X(\mathbb U)$, we see that $N(x,y)$  over $x, y\in N_C(X(\mathbb U))$ is uniformly   bounded above by a constant $N_0$. 

Choose $N>0$ such that  $N'=N'(\lambda, c, N, \theta) \ge N_0$, and consequently, we obtain $h^{-i}fh^{i}=h^{-j}fh^{j}$ for $1\le i\ne j\le N'$. So   $f\in E(h)$  contradicts with the assumption. The result is proved.
\end{proof}

At last, let us mention the following result of Osin which holds for loxodromic elements in any acylindrical action on hyperbolic spaces.
 
\begin{lem}\cite[Lemma 6.8]{Osin6}\label{UniFiniteBoundLem}
There exists a finite number $N_0$ such that $\sharp E^\star(g)\le N_0$ for any loxodromic element $g\in G$.
\end{lem}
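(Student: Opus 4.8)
The plan is to reduce the statement to the fact that the relatively hyperbolic group $G$ carries an \emph{acylindrical} action on a hyperbolic space, where a uniform bound of this shape is standard; Lemma \ref{elementarygroup}(3) already gives finiteness of each $E^\star(g)$, so the only issue is the uniformity of the bound. The first point to record is that $E^\star(g)$ is an intrinsic invariant of $g\in G$: the subgroup $E^+(g)=\{f\in G:\exists n>0,\ fg^nf^{-1}=g^n\}$ is defined purely algebraically, and by Lemma \ref{elementarygroup} it is virtually cyclic with $E^\star(g)$ its unique maximal finite subgroup. Hence $\sharp E^\star(g)$ does not depend on the geometric model used to see $g$, and we may replace the cusp-uniform action $G\curvearrowright X$ by the action of $G$ on its coned-off Cayley graph (the Cayley graph of $G$ with respect to $S\cup\bigcup_{P\in\mathcal P}P$), which is hyperbolic and on which $G$ acts acylindrically by Osin. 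Moreover a loxodromic element for $G\curvearrowright X$ fixes no parabolic point, so it is not conjugate into any $P\in\mathcal P$ and has infinite order, hence acts loxodromically on the coned-off graph as well, and its maximal elementary subgroup — so also $E^\star(g)$ — is the same subgroup.

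It then remains to bound $\sharp E^\star(g)$ uniformly for a loxodromic $g$ in a fixed acylindrical action $G\curvearrowright Y$ on a $\delta'$-hyperbolic space, and here two uniformities are combined. First, in an acylindrical action every loxodromic element admits a quasi-axis with constants $(\lambda,c)$ depending only on $\delta'$ and the acylindricity function (translation lengths are uniformly bounded below, after which a well-chosen orbit is a uniform quasi-geodesic); granting this, the proof of Lemma \ref{ShortEllipticLem} applies verbatim in $Y$ and yields a constant $D_0=D_0(\delta')$, \emph{independent of} $g$, with $d(o,fo)\le D_0$ for every $f\in E^\star(g)$ and every $o$ on the quasi-axis $\alpha$ of $g$ — the mechanism being that $f$ coarsely preserves $\alpha$ (Morse Lemma, with uniform constants) and is elliptic, so it moves points of $\alpha$ only boundedly, while a larger displacement would force $f$ to be loxodromic by Lemma \ref{LoxoCriterionLem}. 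Second, fix $o,o'\in\alpha$ with $d(o,o')\ge R$, where $R=R(D_0)$ and $N=N(D_0)$ are the acylindricity constants for $\epsilon=D_0$; every $f\in E^\star(g)$ satisfies $d(o,fo)\le D_0$ and $d(o',fo')\le D_0$, so acylindricity gives $\sharp E^\star(g)\le N=:N_0$, a bound depending only on the action. As the coned-off action is one fixed action of $G$, this is the desired $N_0$.

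The only genuine difficulty is securing the displacement bound $D_0$ uniformly over all loxodromic $g$, and the honest source of this is acylindricity (through the uniform lower bound on translation length and the resulting uniform quasi-axis); this is precisely what is packaged in \cite[Lemma 6.8]{Osin6}, which is why it is simply invoked. One can also attempt to stay inside $X$: a loxodromic $g$ fixes no parabolic point, so a quasi-axis $\alpha$ of $g$ meets the neutered space $X(\mathbb U)$, and choosing $o\in\alpha\cap X(\mathbb U)$ together with Lemma \ref{ShortEllipticLem} places $fo$ in $N_{D_0}(X(\mathbb U))$ for all $f\in E^\star(g)$; cocompactness of $G\curvearrowright X(\mathbb U)$ and properness then bound $\sharp\{f\in G:d(o,fo)\le D_0\}$ uniformly in $o\in N_{D_0}(X(\mathbb U))$. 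But this route still needs $D_0$ uniform, hence needs $\alpha$ to be a $(\lambda,c)$-quasi-axis with $(\lambda,c)$ depending only on $\delta$, which in turn forces one to pass to a suitable power $g^k$ (note $E^\star(g^k)=E^\star(g)$) chosen large enough that $\bigcup_i (g^k)^i[o,g^ko]$ is such a quasi-axis by Lemma \ref{LoxoCriterion2Lem}. The acylindrical route avoids this bookkeeping, so that is the one I would follow.
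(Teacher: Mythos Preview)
The paper gives no proof of this lemma: it is stated with the citation \cite[Lemma 6.8]{Osin6} and nothing more, so there is no argument in the paper to compare against. Your reconstruction is correct and is essentially the argument behind Osin's lemma itself---pass to an acylindrical action of $G$ on a hyperbolic space (the coned-off Cayley graph), use the uniform lower bound on stable translation length that acylindricity provides to get a quasi-axis with constants depending only on $\delta'$ and the acylindricity data, apply the displacement bound of Lemma~\ref{ShortEllipticLem} to the finite torsion subgroup $E^\star(g)$, and finish with the defining property of acylindricity at two far-apart points on the axis. Your observation that $E^+(g)$ and hence $E^\star(g)$ are purely algebraic invariants, so independent of the geometric model, is exactly what licenses the change of space.

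Your alternative route inside $X$ also works, and in fact the bookkeeping you worry about is milder than you suggest: once you replace $g$ by a power $g^k$ with $\tau(g^k)=k\tau(g)$ exceeding a fixed threshold (say $100\delta$), the orbit through a near-minimizing point is a $(1,c)$-quasi-geodesic with $c=c(\delta)$ \emph{regardless of how large $k$ is}, so Lemma~\ref{ShortEllipticLem} yields a uniform $D_0$ even though $k$ depends on $g$; properness and cocompactness on $X(\mathbb U)$ then finish as you say. But you are right that the acylindrical route is cleaner, and it is the one Osin takes.
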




\section{Short loxodromic elements}\label{ShortLoxSection}
The goal of this section is to provide  short loxodromic elements.

Let $S=S^{-1}$ be a symmetric generating set of a non-elementary group $H$. Recall   $S^{\le n_0}:=\{h\in H: d_S(1,h)\le n_0\}$.

The following is a variant of \cite[Lemma 5.3]{Xie}.
\begin{lem}\label{LargeDiamLem}
For any $M>0$, there exists a positive integer $n_0=n_0(M)>0$ such that for any  finite symmetric generating set $S$ of $H$, we have
$$
\ell_X(S^{\le n_0}) >M.
$$  
\end{lem}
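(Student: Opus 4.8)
The plan is to prove the contrapositive in a compactness-plus-limit-set argument. Suppose the conclusion fails for some fixed $M$; then for every $n_0$ there is a symmetric generating set $S_{n_0}$ of $H$ with $\ell_X(S_{n_0}^{\le n_0})\le M$. By definition of $\ell_X$, for each $n_0$ we may choose a point $x_{n_0}\in X(\mathbb U)$ (or at least in $X$; one arranges it to lie in a fixed compact fundamental domain using $G$-cocompactness on the neutered space, after translating by an element of $G$) with $d(x_{n_0}, s x_{n_0})\le M+1$ for all $s\in S_{n_0}^{\le n_0}$. The point is that as $n_0\to\infty$, the ball $S_{n_0}^{\le n_0}$ exhausts larger and larger portions of $H$, so in the limit we would produce an $H$-invariant bounded orbit, contradicting non-elementarity of $H$.

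First I would set up the normalization: since $G$ acts cocompactly on a neutered space $X(\mathbb U)$, and the relevant quantities $d(x,sx)$ are $G$-equivariant, I can replace each $S_{n_0}$ by $g_{n_0} S_{n_0} g_{n_0}^{-1}$ and $x_{n_0}$ by $g_{n_0} x_{n_0}$ so that all the $x_{n_0}$ lie in a fixed compact set $K\subset X$; pass to a subsequence so that $x_{n_0}\to x_\infty\in X$ (using properness). Next, the key step: fix any $h\in H$. I claim $h$ has a bounded $\langle \cdot\rangle$-orbit-displacement at $x_\infty$. Indeed, $h=s_1\cdots s_k$ is a fixed word in the generators; but the generating sets are changing, so I must be careful — $h$ need not be a short word in $S_{n_0}$. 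This is the main obstacle (see below). Granting that $d(x_{n_0}, h x_{n_0})$ stays bounded (by some constant depending on $h$ but not on $n_0$), passing to the limit gives $d(x_\infty, h x_\infty)\le C_h$.

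To handle the obstacle I would argue as follows. The issue is that a fixed $h\in H$ may have arbitrarily large $S_{n_0}$-length, so it is not literally in $S_{n_0}^{\le n_0}$ for the chosen index; worse, $H$ is not finitely presented a priori. The right fix is not to fix $h$ but to observe that $\ell_X(S_{n_0}^{\le n_0})\le M$ forces every element of the word ball of radius $n_0$ to move $x_{n_0}$ by at most $M$ in a \emph{uniform} way, and $S_{n_0}^{\le n_0}$ contains a set of $n_0$ elements. Following the structure of \cite[Lemma 5.3]{Xie}: one uses that if all of $S_{n_0}^{\le n_0}$ move a basepoint a bounded amount, then in particular one can find within $S_{n_0}^{\le 3}$ (or some fixed power) either an element with large translation length — impossible by the bound once $M$ is fixed and $n_0$ large — or else the whole group $H=\langle S_{n_0}\rangle$ coarsely fixes a point or a pair of points in $\partial X$, i.e. $H$ is elementary. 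More precisely, apply Lemma \ref{LoxoCriterion2Lem}: if $H$ is non-elementary, it contains two independent loxodromics, and a counting/pigeonhole argument (as in Xie) produces, for $n_0$ large relative to $M$, elements $g,h\in S_{n_0}^{\le n_0}$ with $\tfrac14\min\{d(go,o),d(ho,o)\}\ge \max\{\langle go,h^{-1}o\rangle_o,\langle g^{-1}o,ho\rangle_o\}+\delta$ at $o=x_{n_0}$, whence $\ell_X(S_{n_0}^{\le n_0})\ge d(x_{n_0},x_{n_0})$ is forced large — contradiction. The delicate point is extracting such a ``ping-pong pair'' of \emph{uniformly} bounded $S$-length from an arbitrary non-elementary $H$ with arbitrary generating set, which is exactly the content adapted from Xie's work; I would cite and lightly modify his argument rather than reprove it, noting only that the cusp-uniform (rather than cocompact) action requires replacing "cocompact on $X$" by "cocompact on $X(\mathbb U)$" in his basepoint-normalization step, and that loxodromics avoid horoballs (as used in Lemma \ref{LinearBILem}).
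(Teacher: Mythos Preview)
Your setup is on the right track: normalize the basepoint into a fixed compact $K\subset X(\mathbb U)$ by conjugating, and exploit cocompactness on the neutered space. But the argument you give to resolve the ``main obstacle'' does not work, and you miss the one-line observation that actually closes the gap.

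The ping-pong paragraph is incoherent as written: the displayed conclusion ``$\ell_X(S_{n_0}^{\le n_0})\ge d(x_{n_0},x_{n_0})$'' is vacuous, and nothing in Lemma~\ref{LoxoCriterion2Lem} lets you manufacture, from an \emph{arbitrary} generating set $S_{n_0}$ of a non-elementary $H$, two elements of \emph{uniformly bounded} $S_{n_0}$-length satisfying the Gromov-product hypothesis at the chosen basepoint. That is precisely the statement you are trying to prove, so invoking it here is circular. Xie's Lemma~5.3 does not proceed via a ping-pong pair either, so ``cite and lightly modify'' would not rescue this.

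The missing idea is a finiteness, not a dynamical, observation. Once you have conjugated so that the basepoint lies in $K$ and every $s\in g_{n_0}S_{n_0}g_{n_0}^{-1}$ satisfies $d(p,sp)\le 2\operatorname{diam}(K)+M$ for a fixed $p\in K$, the conjugated generating set is contained in the \emph{fixed finite set} $A=\{g\in G:d(p,gp)\le 2\operatorname{diam}(K)+M\}$. Hence there are only finitely many possible conjugated generating sets, and only finitely many conjugates $H'=\langle S'\rangle$ of $H$ that can arise. For each such $H'$ (infinite, acting properly) pick one element $g_{H'}$ with $d(g_{H'}p,p)>M+2\operatorname{diam}(K)$; then $n_0:=\max_{S'\subset A,\ \langle S'\rangle\in\mathbb H} d_{S'}(1,g_{H'})$ is a finite number depending only on $M$, and unwinding the conjugation gives an element of $S^{\le n_0}$ moving the original basepoint by more than $M$. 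This is exactly the paper's argument, and it replaces your compactness-and-limit scheme by a direct pigeonhole. (Your contradiction approach can be repaired the same way: infinitely many $n_0$ yield the \emph{same} $S'\subset A$, so $(S')^{\le n_0}$ exhausts $H'$ while moving $p$ by $\le M+2\operatorname{diam}(K)$, contradicting properness.)

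One further gap you glossed over: you assert $x_{n_0}\in X(\mathbb U)$ without justification. This requires choosing $\mathbb U$ to be $M$-separated and using that $H$, being non-elementary, cannot fix a parabolic point; otherwise the basepoint could drift into a horoball and the cocompactness normalization fails.
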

\begin{proof}
Let $\mathbb U$ be a $M$-separated $G$-invariant system of horoballs centered at the parabolic points. Recall that the action of $G$ on $X(\mathbb U)$ is proper and co-compact. Let $K \subset X(\mathbb U)$ be a compact set such that $\cup_{g\in G} g(K) = X(\mathbb U)$. Fix a point $p\in K$ and denote $a = diam(K)$ depending on $M$. The proper action implies the set  $$A=\{g\in G:d(g(p),p)\le 2a+M\}$$ is a finite set. Since $G$ is finitely generated,  up to increasing the value of $a$, we can assume that  $A$ generates $G$. 

Consider the finite set $\mathbb H$ of conjugates of $H$ which is generated by some finite set $S'\subset A$.  Since $H$ is infinite, the proper action of $H$ on $X$ implies that for every $H'\in\mathbb H$, there is some $g_{H'} \in H'$ with $d(g_{H'} (p), p) > M + 2a$. Since $A$ generates $G$ and $\mathbb H$ is finite, then the integer 
$$n_0 := \max\{d_{S}(1,g_{H'}) : S' \subset A, H':=\langle S'\rangle \in \mathbb H\}$$
is finite.

Now let $S$ be a finite generating set of $H$. If $\ell_X(S) > M$, then we are done: $\ell_X(S^{n_0})\ge \ell_X(S)>M$. If there is some $x \in X$ with
$\ell_x(S) \le M$, then    $x\in X(\mathbb U)$. Indeed, assume that  $x\in U$ for some $U\in \mathbb U$. By definition of $\ell_x(S) \le M$ we have $d(s(x), x) \le M$ for all $s\in S$. The $M$-separation of $\mathbb U$ implies $s(U)=U$ for all $s \in S$ and so the center of $U$ would be fixed by the non-elementary subgroup $H$: a contradiction. Hence, it follows that $x\in X(\mathbb U)$. 

Recalling that $\cup_{g\in G} g(K) = X(\mathbb U)$, we choose $g\in G$ with $g(x)\in K$. We now show $S':= \{g sg^{-1} : s \in S\} \subset A$. Indeed, for each $s\in S$, we have 
\begin{align*}
d(p, g s g^{-1}(p)) &\le d(p, g(x))+d(g(x), g s(x))+d(g s(x), g sg^{-1}(p)) \\
&\le d(p, g (x))+ d(x, s(x)) + d(g(x), p) \le 2a + M.    
\end{align*}

Since $S'  \subset A$ generates $H':=g Hg^{-1}\in \mathbb H$, by the definition of $n_0$, there is some integer  $1 \le k \le n_0$ such that $d_{S'}(1, g_{H'})=k$. Thus,    $$g_{H'} = (g s_1g^{-1})\cdots(g s_k g^{-1}) = g(s_1\cdots s_k)g^{-1}$$ 
for $s_i \in S \cup S^{-1}$.
Now by
 triangle inequality, we have \begin{align*}
d(g^{-1}g_{H'}g(x), x) & = d(g_{H'}g(x), g(x)) \\
&\ge d(g_{H'}(p), p) - d(g_{H'}(p), g_{H'}g(x))- d(g(x), p)\\
&= d(g_{H'}(p), p) - d(p, g(x)) - d(g(x), p) \\
&> M + 2a- a - a = M.    
\end{align*}
Since $g^{-1}g_{H'}g = s_1 \cdots s_k \in S^{\le n_0}$, it follows that $\ell_x(S^{\le n_0}) > M$. 
\end{proof}

The following result improves   Proposition 3.2 of \cite{Koubi}.
\begin{lem}\label{ShortHypLem}
Let $X$ be a $\delta$-hyperbolic geodesic metric space, and $H$ a group of isometries of $X$ with a finite symmetric generating set $S$. If $\ell_X(S) > 28\delta$, then $H$ contains a loxodromic element $b\in S^{\le 2}$. Moreover, there exists a constant $C=C(\delta)>0$ such that   $$d(o,bo)\ge \ell_X(S)-C$$ for some point $o\in X$.

\end{lem}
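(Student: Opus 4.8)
The plan is to follow Koubi's strategy \cite{Koubi}: look for a loxodromic element among the "short" products $S^{\le 2}$ by playing off the loxodromic criterion Lemma \ref{LoxoCriterionLem} against the hypothesis $\ell_X(S)>28\delta$. Fix a point $o\in X$ realizing $\ell_o(S)$ up to a small error, say $d(o,so)\ge \ell_X(S)-\epsilon$ for some $s\in S$, where $\epsilon$ is negligible compared to $\delta$ (or pass to an approximate infimum). If $s$ itself is loxodromic we are essentially done by Lemma \ref{QuasiAxisLengthLem} once we know $s$ has a quasi-axis with controlled constants; but in general $s$ may fail the criterion $d(o,so)\ge 2\langle o,s^2o\rangle_{so}+6\delta$, i.e.\ $\langle o,s^2o\rangle_{so}$ is large, meaning the geodesics $[so,o]$ and $[so,s^2o]$ fellow-travel for a long time near $so$. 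The idea is then to replace $s$ by a product $s't$ with $s',t\in S$ (possibly $s'=s$) chosen so that the two incoming/outgoing directions at the relevant basepoint are "spread apart," which forces the Gromov product down and makes the criterion applicable.

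Concretely, I would first handle the case that some $s\in S$ already satisfies $d(o,so)\ge 2\langle o,s^2o\rangle_{so}+6\delta$: then $s$ is loxodromic by Lemma \ref{LoxoCriterionLem}, it admits a $(\lambda,c)$-quasi-axis with $\lambda,c$ depending only on $\delta$ (this is the standard fact that the orbit $\{s^no\}$ under a "Koubi-loxodromic" element is a quasi-geodesic with uniform constants), and Lemma \ref{QuasiAxisLengthLem} gives $|\ell_X(s)-d(o,so)|\le C(\delta)$, so $b=s$ and $d(o,bo)\ge \ell_X(s)-C\ge \ell_X(S)-2C$ after re-centering; the statement allows $b\in S^{\le 2}$, so $b\in S$ is fine. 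Otherwise, for every $s\in S$ we have $\langle o,s^2o\rangle_{so}>\frac{1}{2}(d(o,so)-6\delta)$, which is large. Here I would exploit the non-existence of a global fixed point encoded in $\ell_X(S)>28\delta$: because $\ell_o(S)$ is large, there must be two generators $s,t\in S$ (or $s$ and $s^{-1}$, but symmetry of $S$ means we should look at genuinely different directions) whose displacements at $o$ point in substantially different directions, i.e.\ $\langle so, to^{\pm}\rangle_o$ is small relative to $\min\{d(o,so),d(o,to)\}$. Then Lemma \ref{LoxoCriterion2Lem} applies with the pair $(s,t)$: it yields that $st$ is loxodromic, that $\bigcup_i (st)^i([o,so]\cdot s[o,to])$ is a $(\lambda,c)$-quasi-geodesic with $\lambda,c$ depending only on $\delta$, and that $|\ell_X(st)-d(o,sto)|\le C(\delta)$; since $d(o,sto)\ge$ roughly $d(o,so)+d(so,sto)-2\langle\cdots\rangle \ge \ell_X(S)-O(\delta)$, we get the desired lower bound with $b=st\in S^{\le 2}$.

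The main obstacle, and the place where the constant $28\delta$ should get pinned down, is the dichotomy step: showing that if \emph{no} single generator satisfies the loxodromic criterion, then one can always find a pair $(s,t)$ (with $t$ possibly equal to $s$ or $s^{-1}$, appropriately) satisfying the hypothesis of Lemma \ref{LoxoCriterion2Lem}. The point is that $\langle o, s^2o\rangle_{so}$ being large says $s^2$ "doubles back," so $s^{-1}$ and $s$ send $o$ to nearly antipodal points seen from $so$; translating this to the basepoint $o$ and combining the large-displacement hypothesis $\ell_X(S)>28\delta$ with $\delta$-hyperbolicity (four-point inequalities on $o, so, s^{-1}o, s^2o$ and on triples involving a second generator) should force $\langle so, s^{-1}o\rangle_o$ or some $\langle so, to\rangle_o$ to be at most a bounded fraction of the displacements — exactly the quarter-displacement bound needed. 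I would organize this as: (i) from failure of the single-element criterion at every $s$, deduce a lower bound on $\langle o, s^2o\rangle_{so}$; (ii) convert, via hyperbolicity and the triangle with vertices $s^{-1}o, o, so$ (apply $s$ to the pair $(o, s^2 o)$ seen from $so$), into an upper bound on $\langle s^{-1}o, so\rangle_o$; (iii) check the numerics $\frac14 d(o,so)\ge \langle s^{-1}o, so\rangle_o+\delta$ hold once $\ell_X(S)>28\delta$, and apply Lemma \ref{LoxoCriterion2Lem} to $(s,s^{-1})$... but $ss^{-1}=1$, so in fact one must instead use this spreading to a \emph{second} generator $t$, using that $\langle so, to\rangle_o$ cannot be large for all $t$ without $o$ being almost-fixed. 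Sorting out this last wrinkle carefully — i.e.\ guaranteeing a genuinely non-trivial product — is the crux, and I expect it to cost a careful case analysis weighing the three Gromov products $\langle so,to\rangle_o$, $\langle so,t^{-1}o\rangle_o$, $\langle s^{-1}o,to\rangle_o$ against $\min\{d(o,so),d(o,to)\}$.
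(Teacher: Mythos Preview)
Your overall plan is the right one and matches Koubi's template, but the crucial dichotomy step is not actually carried out, and the sketch you give for it contains a mistake that blocks the route you outline. Specifically, in your step~(ii) you propose to ``convert'' the failure of the single-element criterion---which says $\langle o, s^2 o\rangle_{so}$ is \emph{large}---into an \emph{upper} bound on $\langle s^{-1}o, so\rangle_o$. But these two Gromov products are equal (just apply the isometry $s^{-1}$), so a lower bound on one is a lower bound on the other; there is no conversion to smallness. You notice the $ss^{-1}=1$ obstruction, but the real obstruction is earlier: when every $s$ ``doubles back,'' the pair $(s,s)$ is useless for Lemma~\ref{LoxoCriterion2Lem} not just because the product may be trivial, but because the relevant Gromov product is precisely the large one.

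What is missing is the mechanism that forces the existence of a pair $(t,s)$ whose directions at $o$ are genuinely spread. The paper supplies this via a basepoint-displacement argument: fix $t\in S$ with $d(o,to)=\ell_o(S)$ and a point $m\in[o,to]$ at distance $L_0=4\delta$ from $o$. One shows that if \emph{every} long-displacement $s\in S$ satisfies both $\langle o, s^2 o\rangle_{so}>L_0$ and $\max\{\langle to, so\rangle_o,\langle t^{-1}o, s^{-1}o\rangle_o\}>L_0$, then $\delta$-thinness forces $d(m, sm)\le \ell_o(S)-\delta$ for \emph{all} $s\in S$ (the short-displacement ones are handled trivially by the triangle inequality). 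Hence $\ell_m(S)<\ell_X(S)$, contradicting $\ell_X(S)=\inf_x\ell_x(S)$. This contradiction produces an $s$ with either $\langle o, s^2 o\rangle_{so}\le L_0$ (so $s$ itself works) or with both $\langle to, so\rangle_o\le L_0$ and $\langle t^{-1}o, s^{-1}o\rangle_o\le L_0$ (so $t^{-1}s$ works via Lemma~\ref{LoxoCriterion2Lem}). The constant $28\delta$ comes out of the numerics $\ell_o(S)>7L_0$. This ``move $o$ along $[o,to]$ and decrease the displacement'' idea is the heart of the argument and is absent from your proposal; the case analysis you anticipate at the end is not a substitute for it, because without it there is no reason some pair must satisfy the quarter-displacement bound.
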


\begin{proof}

Let $o\in X$ such that $\ell_{X}(S)+\delta>\ell_o(S)\ge \ell_{X}(S)$. Set $L_0=4\delta$ and then  $\ell_o(S)>7L_0$. Denote by $S_0$ the (non-empty) set of elements $s\in S$ so that $$d(o, so)\ge \ell_o(S)-2L_0-\delta.$$

Let   $t\in S$ such that $\ell_o(S)=d(o, to)$, and $m\in [o, to]$ so that $d(o, m) =L_0$. 

The main observation is as follows.
\begin{claim}
There exists an isometry $s\in S_0$ such that  $s$ is either loxodromic with $\langle o, s^2o\rangle_{so}\le L_0$  or satisfies 
$$
\max\{\langle t o, s o \rangle_o,\langle t^{-1} o, s^{-1} o \rangle_o\}\le L_0.  
$$ 
\end{claim}
\begin{proof}[Proof of the Claim]
Assume to the contrary that for all $s\in S_0$, we have 
\begin{equation}\label{MaxiumEQ}
\max\{\langle t o, s o \rangle_o,\langle t^{-1} o, s^{-1} o \rangle_o \}> L_0.      
\end{equation}
Moreover, each $s\in S_0$ is either non-loxodromic or loxodromic   with $\langle o, s^2o\rangle_{so}> L_0$. 
If $s\in S_0$ is non-loxodromic, by Lemma \ref{LoxoCriterionLem}, we have $$\langle o, s^2o\rangle_{so}\ge d(o,so)/2-3\delta\ge (\ell_o(S)-2L_0-\delta)/2-3\delta\ge L_0.$$ Hence, for each $s\in S_0$, we have $\langle o, s^2o\rangle_{so}\ge L_0$. In particular, $\langle t^{-1}o, to\rangle_{o}\ge L_0$.

By (\ref{MaxiumEQ}), assume that $\langle t^{\star} o, s^{\star} o \rangle_o>L_0$ for $\star\in \{1, -1\}$. Let $m_1, m_2\in [o, s^\star o]$ for $s\in S_0$ so that $d(o, m_1)=d(s^\star o, m_2)= L_0$. By hyperbolicity,  $$\langle s^\star o, to\rangle_o\ge \min\{\langle s^\star o, t^\star o\rangle_o, \langle t^{-1}o, to\rangle_{o}\}-\delta \ge L_0-\delta$$ which by the $\delta$-thin triangle property implies $d(m, m_1)\le 3\delta$. Using again $\delta$-thin triangle with $\langle o, (s^\star)^2o\rangle_{s^\star o} \ge L_0$, we obtain that $d(m_2, s^\star m_1)\le \delta$.

We shall derive  $\ell_m(S)<\ell_X(S)$, which is a  contradiction. Indeed, for each $s\in S_0,$
\begin{align*}
d(m, s^\star m)&\le  2 d(m,m_1)+d(m_1, s^\star m_1)\le 7\delta+d(m_1,m_2)\\
&\le 7\delta+d(o,s^\star o)-2L_0 \\
& \le \ell_o(S)-2L_0+7\delta \le \ell_o(S)-\delta.  
\end{align*} 
The definition of $s\in S\setminus S_0$ gives $d(o, so)\le \ell_o(S)-2L_0-\delta$ and thus  $$d(m, sm)\le 2d(o, m)+d(o, so)\le 2L_0+d(o,so)< \ell_o(S)-\delta$$
We obtained the contradiction $\ell_m(S)\le \ell_o(S)-\delta< \ell_X(S)$. The proof of the claim is now complete.  
\end{proof}

By the above claim,  there exists $s\in S_0$ such that either
$$\langle s o, s^{-1} o \rangle_o+\delta \le L_0+\delta \le \frac{1}{4} d(o,so)$$
or  
$$
\max\{\langle t o, s o \rangle_o, \langle t^{-1} o, s^{-1} o \rangle_o\} +\delta \le L_0+\delta \le \frac{1}{4} \min\{d(o,so), d(o, to)\} 
$$
The proof is then completed by   Lemma \ref{LoxoCriterion2Lem}. 
\end{proof}

\section{Short loxodromic elements with large injectivity}\label{LargeInjSection}

Let  $\lambda_0=\lambda_0(\delta), c_0=c_0(\delta), C_0=C_0(\delta)$ be given by Lemma \ref{ShortHypLem}. Let $n_0=n_0(28\delta)$ be given by Lemma \ref{LargeDiamLem} so that $\ell_X(S^{\le n_0})> 28\delta$.  Thus, $S$ contains a  loxodromic element $b\in H$ and there exist a point $o\in X$ such that the path 
\begin{align}\label{alphaDef}
\alpha=\bigcup_{n\in \mathbb Z} b^n[o,bo]    
\end{align} is a $(\lambda_0, c_0)$-quasi-axis for $b$. 

By hyperbolicity, any quadrilateral  with $(\lambda_0, c_0)$-quasi-geodesic sides is $C$-thin for some $C=C(\lambda_0, c_0, \delta)>\max\{C_0, \delta\}$: any side is contained in the $C$-neighborhood of the other three sides. 

Since $\sharp (S^{2n_0}) \ge \sharp S$, it suffices to prove Theorem \ref{mainthm2} assuming the generating set $S$ with $$\ell_X(S)>\max\{28\delta,2C\}.$$ 
By Lemma \ref{ShortHypLem},  we have 
\begin{align}\label{distboEQ}
d(o, bo)\ge \ell_X(S)-C_0\ge C.    
\end{align}  

Since $H$ is not virtually cyclic, $S$ contains an element $f$ such that $f\notin E(b)$. Indeed, if not, any $f\in S$ would fix the set of fixed points of $b$ so it follows from $H=\langle S\rangle$   that the limit set of $H$ consists of two points. By the subgroup  classification    in (the convergence action of) $G$, we obtain that $H$ would be virtually cyclic. This is a contradiction. 
\subsection{Loxodromic elements raising to power} 
In the remainder of this section, we assume that $f\in S\setminus E(b)$. Consider the element $h:=fb^{n}$ for $n\ge 0$.

\begin{figure}[htb] 
\centering \scalebox{0.6}{
\includegraphics{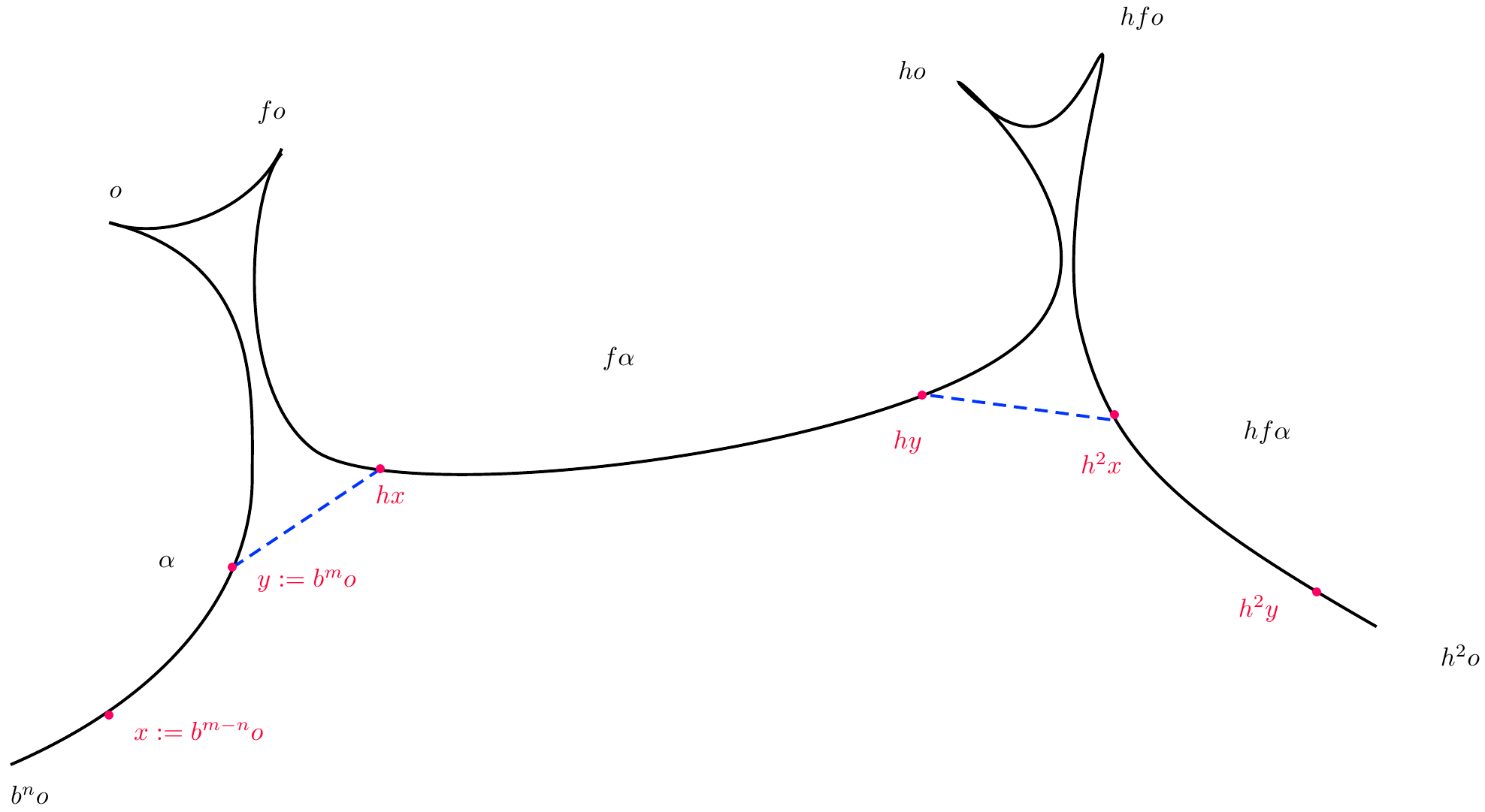} 
} \caption{Truncate the quadrilaterals $y,o,fo,hx$ and $hy, ho, hfo, h^2x$} \label{figure1}
\end{figure}

Consider the points $x=b^{m-n}o, y=b^{-m}o$ for $0\le m\le n$ on  $\alpha$, and then $hx=fb^{m}o\in f\alpha$. Consider two quasi-geodesics $\alpha$ and $f\alpha$ connected by a geodesic $[o,fo]$. To get a quasi-axis of $h$, we shall use the next lemma to truncate the part containing $[o,fo]$ of $\alpha$ and $f\alpha$ at the points $y$ and $hx$.
\begin{lem}\label{SmallProductLem}
For any $\theta>0$, there exists $n=n(\theta, \delta), m=m(\theta, \delta)>0$ so that $h=fb^n$ for any $n>n(\theta, \delta)$ enjoys the following property. 

If $\theta d(o,bo) \ge d(o,fo)$, then  
$\langle x,hx\rangle_{y}, \langle y, hy\rangle_{hx} \le C$ and $d(y, hx)\ge  \theta d(o,bo).$
\end{lem}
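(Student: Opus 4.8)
The plan is to exploit the structure of $\alpha$ as a $(\lambda_0,c_0)$-quasi-axis for $b$ together with the assumption $f \notin E(b)$, which makes $f\alpha$ "diverge" from $\alpha$ by Lemma~\ref{LinearBILem}. Fix $x = b^{m-n}o$, $y=b^{-m}o$, so that $hx = fb^m o \in f\alpha$ and $hy = fb^{n}b^{-m}o \cdot$ (tracking indices carefully: $h = fb^n$ and $y = b^{-m}o$ gives $hy = fb^{n-m}o \in f\alpha$ as well). The point is to pick $m$ and $n-m$ both large compared to the constants, so that the Gromov product estimates can be read off from the fellow-travelling of geodesics with $\alpha$ and $f\alpha$ in the quadrilateral pictured in Figure~\ref{figure1}. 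First I would apply $C$-thinness of the quadrilateral with sides $[y,o]_\alpha$, $[o,fo]$, $[fo,hx]_{f\alpha}$, and a geodesic $[y,hx]$: since $d(o,fo) \le \theta d(o,bo)$ is small relative to the long quasi-geodesic segments $[y,o]_\alpha$ (length $\approx m\, d(o,bo)$) and $[fo,hx]_{f\alpha}$ (length $\approx m\, d(o,bo)$), the geodesic $[y,hx]$ must $C$-fellow-travel a terminal segment of $[y,o]_\alpha$ and then an initial segment of $[fo,hx]_{f\alpha}$, with the "switch" happening near $o$ and $fo$.

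From this fellow-travelling picture the Gromov product bounds follow. For $\langle x, hx\rangle_y$: the geodesic $[y,x]_\alpha$ (a sub-quasi-geodesic of $\alpha$, since $x = b^{m-n}o$ with $m-n \le 0$, lies on the far side of $y$ away from $o$) and the geodesic $[y,hx]$ both leave $y$ heading along $\alpha$ toward $o$, so they share a long common initial segment; hence $\langle x, hx\rangle_y$ is bounded by a constant depending only on $\lambda_0,c_0,\delta$, i.e.\ by $C$ after enlarging $C$. Symmetrically, for $\langle y, hy\rangle_{hx}$: both $[hx,hy]$ (a sub-quasi-geodesic of $h\alpha = fb^n\alpha$, equivalently $f\alpha$ since $\alpha$ is $\langle b\rangle$-invariant) and $[hx,y]$ leave $hx$ heading back along $f\alpha$ toward $fo$, sharing a long initial segment, so $\langle y, hy\rangle_{hx} \le C$. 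In each case "long enough to dominate the constants" is arranged by choosing $m = m(\theta,\delta)$ and $n - m$ large; I would also use that $d(o,fo)$ is controlled — $f \in S$ and $\ell_X(S)$ need not be bounded above, but $d(o,fo)$ enters only through the hypothesis $\theta d(o,bo) \ge d(o,fo)$, which is exactly what lets the short side $[o,fo]$ be absorbed.

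For the lower bound $d(y,hx) \ge \theta d(o,bo)$: using the near-geodesic concatenation $[y,o]_\alpha \cup [o,fo]\cup [fo,hx]_{f\alpha}$ and the thinness/contracting estimates (or directly Corollary~\ref{LocalGeodCor} applied to the sequence $y, o, fo, hx$ once the relevant Gromov products at $o$ and $fo$ are checked to be small — this is where $f\notin E(b)$ and Lemma~\ref{LinearBILem} guarantee $\alpha$ and $f\alpha$ don't fellow-travel too long, so the products $\langle y, fo\rangle_o$ and $\langle o, hx\rangle_{fo}$ are each $O(d(o,bo))$ but with a controllable constant), one gets $d(y,hx) \gtrsim (d(o,y) + d(fo,hx)) - O(d(o,bo)) \ge 2m\,d(o,bo) - O(d(o,bo))$, which exceeds $\theta d(o,bo)$ once $m \ge m(\theta,\delta)$ is large enough. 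The main obstacle I anticipate is the bookkeeping at the "corners" $o$ and $fo$: one must verify that the relevant Gromov products there are genuinely bounded (or at least that the geodesic $[y,hx]$ passes uniformly close to both $o$ and $fo$), and for this the key input is that $f \notin E(b)$, so that $\alpha \cap N_R(f\alpha)$ has diameter $\le N\cdot d(o,bo)$ by Lemma~\ref{LinearBILem} — this prevents the geodesic $[y,hx]$ from "shortcutting" through a long overlap of $\alpha$ and $f\alpha$, which is precisely what would otherwise ruin both the Gromov product bounds and the length lower bound. Everything else is routine $\delta$-hyperbolic geometry with quasi-geodesics, handled by Morse's lemma and the thin-quadrilateral constant $C$.
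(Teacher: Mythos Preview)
Your overall plan is sound: thin quadrilaterals built from pieces of $\alpha$ and $f\alpha$, with Lemma~\ref{LinearBILem} controlling how long $\alpha$ and $f\alpha$ can fellow-travel, is exactly the right toolkit. But the central step in your Gromov-product argument is backward.

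You write that $[y,x]_\alpha$ and $[y,hx]$ ``both leave $y$ heading along $\alpha$ toward $o$, so they share a long common initial segment; hence $\langle x, hx\rangle_y$ is bounded by a constant''. Two things are wrong here. First, your own parenthetical is correct that $x=b^{m-n}o$ lies on the far side of $y$ from $o$, so $[y,x]_\alpha$ heads \emph{away} from $o$, not toward it. Second, and more seriously, a long common initial segment of $[y,x]$ and $[y,hx]$ would make $\langle x,hx\rangle_y$ \emph{large}, not small: recall that $\langle x,hx\rangle_y$ is (up to $\delta$) the distance from $y$ to the geodesic $[x,hx]$, so $\langle x,hx\rangle_y\le C$ says precisely that the two geodesics from $y$ diverge immediately, i.e.\ $y$ sits essentially on $[x,hx]$. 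The same confusion recurs verbatim in your argument for $\langle y,hy\rangle_{hx}$. So as written the argument does not establish the required bounds; what you need is the opposite observation, that $[y,x]_\alpha$ and $[y,hx]$ go in opposite directions along $\alpha$ at $y$.

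The paper's argument is both correct and tighter, and it sidesteps the corner bookkeeping you flag as ``the main obstacle''. The key single estimate is $d(y,f\alpha)>R$ with $R=\theta\,d(o,bo)$, proved by contradiction from Lemma~\ref{LinearBILem}: if $d(y,f\alpha)\le R$ then $o$ and $y=b^{-m}o$ both lie in $\alpha\cap N_R(f\alpha)$, which has diameter at most $N\,d(o,bo)$, contradicting $d(o,y)\ge N\,d(o,bo)$ once $m>N$. Now apply $C$-thinness to the quadrilateral with vertices $x,o,fo,hx$: the point $y$ lies on the side $[x,o]_\alpha$, is more than $R$ from the $f\alpha$-side, and is far from the short side $[o,fo]$, so it must be within $C$ of the remaining side $[x,hx]$; this is exactly $\langle x,hx\rangle_y\le C$. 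The inequality $d(y,hx)\ge \theta\,d(o,bo)$ is then immediate from $d(y,hx)\ge d(y,f\alpha)>R$, with no need for Corollary~\ref{LocalGeodCor} or any Gromov-product estimates at $o$ and $fo$. The second bound $\langle y,hy\rangle_{hx}\le C$ follows by the symmetric argument with the quadrilateral $y,o,fo,hy$.
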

\begin{proof}
Consider the quadrilateral formed by the subpaths $[x, o]_\alpha$, $[o,fo]$, $f[o,b^{m}o]_\alpha$, and the geodesic $[x, hx]$ as depicted in Figure \ref{figure1}. 

With the inequality (\ref{distboEQ}), the $(\lambda_0,c_0)$-quasi-geodesicity of $\alpha$ in (\ref{alphaDef}) gives   
\begin{equation}\label{ndistboEQ}
\begin{array}{rl}
\forall n\ge 1, \; n\cdot d(o,bo)\ge d(o, b^{n}o)&\ge \lambda_0^{-1} n\cdot d(o,bo)-c_0\\
&\ge  \lambda_0^{-1} n \cdot d(o, fo) -(c_0+nC_0/\lambda_0). 
\end{array}
\end{equation}

Set $N=N(\lambda_0,c_0,\delta, \theta)$ given by Lemma \ref{LinearBILem}. By (\ref{ndistboEQ}), we can choose the least integer $m=m(\lambda_0,c_0,\delta)> N$ and then the least $n=n(\lambda_0,c_0, \theta, \delta)$   so that for any $n\ge n(\theta, \delta)$, \begin{equation}\label{n1DefnEQ}
d(o,b^{m}o), d(o,b^{n-2m}o)\ge \max\{N d(o,bo), 10C\}.     
\end{equation}

If denote $R:=\theta d(o,bo) \ge d(o,fo)$, then 
$$
d(y, f\alpha)> R.$$
Indeed, assume to the contrary that $d(y, f\alpha)\le R$. Since  $d(o,f\alpha)\le d(o, fo)\le R$, by Lemma \ref{LinearBILem}, the diameter of   $\alpha\cap N_R(f\alpha)$  is at most  $Nd(o,bo)$. However, $\alpha\cap N_R(f\alpha)$ contains two points $y=b^{-m}o,o$ with distance at least $Nd(o,bo)$ by Eq. \ref{n1DefnEQ}. This is a contradiction. 
Thus,  $d(y, f\alpha)> R$ is proved. The $C$-thin quadrilateral property then implies  $d(y, [x, hx])\le C$, so  we obtain $\langle x, hx\rangle_y \le d(y, [x, hx])\le C$. 

By symmetry, we can run the above argument   for the quadrilateral with vertices $y, o, fo, hy$ and obtain $\langle y, hy\rangle_{hx} \le C$.

Note that $d(y, hx)\ge d(y, f\alpha)\ge R=\theta d(o,bo)$. The proof is complete.
\end{proof}

\begin{lem}\label{HighPowerElemLem}
There exist constants $n_1=n_1(\delta), m_1=m_1(\delta),   \lambda=\lambda(\delta), c=c(\delta)>0$ such that for any $n\ge n_1$, the element $h:=fb^{n}$ is loxodromic with a $(\lambda, c)$-quasi-axis $\beta$ defined  as follows
\begin{align}\label{BetaPathEQ}
\beta:=\bigcup_{i\in \mathbb Z}    h^i\left([x,y]_\alpha[y,hx]\right) 
\end{align} 
where $x=b^{m_1-n}o, y=b^{-m_1}o$.
\end{lem}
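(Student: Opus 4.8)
The plan is to apply Lemma~\ref{SmallProductLem} with a suitable fixed choice of $\theta$ and then invoke the ``long local geodesics are quasi-geodesics'' principle (Corollary~\ref{LocalGeodCor}) along the $h$-orbit of the concatenation $[x,y]_\alpha \cdot [y,hx]$. First I would fix $\theta = \theta(\delta) > 0$ small enough — concretely, small enough that the two truncated segments $[x,y]_\alpha$ and $[y,hx]$ have length comfortably exceeding the threshold needed below (recall $d(y,hx) \ge \theta d(o,bo)$ from Lemma~\ref{SmallProductLem}, and $d(x,y)_\alpha$ is the $\alpha$-length of $b^{2m_1-n}$, which is $\ge \lambda_0^{-1}(n-2m_1)d(o,bo) - c_0$ by the quasi-geodesicity of $\alpha$). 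Since $S$ was reduced (at the start of Section~\ref{LargeInjSection}) so that $d(o,fo) \le \ell_x(S) \le \ell_X(S) + \delta$ while $d(o,bo) \ge \ell_X(S) - C_0$, the hypothesis $\theta d(o,bo) \ge d(o,fo)$ of Lemma~\ref{SmallProductLem} holds once $\ell_X(S)$ is large (already arranged) — more precisely I would note $d(o,fo)/d(o,bo) \to 1$ is not what we need; rather $d(o,fo) \le d(o,bo) + (C_0 + \delta)$, so $\theta d(o,bo) \ge d(o,fo)$ fails in general. Instead I would first replace $f$ by $fb^{n'}$ for a bounded $n'$ if necessary, or — cleaner — simply absorb this by choosing $n$ large: we want the truncation at $y$ and $hx$, which Lemma~\ref{SmallProductLem} delivers for $n \ge n(\theta,\delta)$ under $\theta d(o,bo)\ge d(o,fo)$; if $\theta < 1$ this may fail, so I would instead apply Lemma~\ref{SmallProductLem} with $\theta = 2$ (or any $\theta$ with $\theta d(o,bo) \ge d(o,fo)$, which holds for $\theta \ge 2$ since $d(o,fo)\le \ell_X(S)+\delta \le 2d(o,bo)$ once $\ell_X(S) \ge 2C_0 + 2\delta$, already ensured), obtaining $\langle x,hx\rangle_y, \langle y,hy\rangle_{hx} \le C$ and $d(y,hx) \ge 2d(o,bo)$.

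Set $m_1 := m(2,\delta)$ and $n_1 := n(2,\delta)$ from Lemma~\ref{SmallProductLem}, and $x = b^{m_1-n}o$, $y = b^{-m_1}o$. The next step is to verify the local Gromov-product hypothesis of Corollary~\ref{LocalGeodCor} along the bi-infinite sequence of points obtained by listing the endpoints of $[x,y]_\alpha$, $[y,hx]$, and their $h$-translates: that is, $\ldots,\ h^{-1}(hx) = x,\ y,\ hx,\ hy,\ h^2x,\ \ldots$, i.e. the two-periodic pattern $x_{2k} = h^k y$ (up to relabeling) and $x_{2k+1} = h^{k}(hx)$. There are only two types of consecutive triples modulo the $h$-action: the triple $(x, y, hx)$ and the triple $(y, hx, hy)$. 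For the first, $\langle x, hx\rangle_y \le C$ by Lemma~\ref{SmallProductLem}; for the consecutive contribution at $y$ I also need $\langle \cdot, \cdot \rangle$ bounded, and by the two-periodicity the relevant sum $\langle x_{i-1}, x_{i+1}\rangle_{x_i} + \langle x_i, x_{i+2}\rangle_{x_{i+1}}$ is, in each residue class, the sum of $\langle x, hx\rangle_y$ (or $\langle h^{-1}y, hx\rangle_y$, which equals $\langle y, hx\rangle_{hx}$-type terms after translating) and $\langle y, hy\rangle_{hx}$ — both $\le C$. Meanwhile the separation $d(x_i, x_{i+1})$ is, alternately, $\mathrm{len}([x,y]_\alpha) \ge \lambda_0^{-1}(n - 2m_1)d(o,bo) - c_0$ and $d(y,hx) \ge 2d(o,bo) \ge 2C$. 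Choosing $n_1$ large enough (depending only on $\delta$, via $\lambda_0, c_0, C$) so that both separations exceed, say, $16(C+\delta)$, the hypothesis $\langle x_{i-1},x_{i+1}\rangle_{x_i} + \langle x_i, x_{i+2}\rangle_{x_{i+1}} \le d(x_i,x_{i+1})/4 - \delta$ of Corollary~\ref{LocalGeodCor} holds uniformly.

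Corollary~\ref{LocalGeodCor} then gives $d(x_{-N}, x_N) \ge \tfrac12 \sum d(x_i, x_{i+1})$ for every $N$, which grows linearly in $N$; this shows the path $\beta$ of \eqref{BetaPathEQ} is a $(2, c')$-quasi-geodesic on each finite segment with uniform constant, hence a genuine $(\lambda, c)$-quasi-geodesic with $\lambda, c$ depending only on $\delta$ (a standard ``local-to-global'' packaging, or one invokes the quantitative local geodesic stability lemma directly). Since $\beta$ is $h$-invariant by construction and $d(x_{-N}, x_N) \to \infty$, the element $h$ has unbounded orbit along $\beta$, so $h$ is loxodromic with quasi-axis $\beta$; alternatively, loxodromicity follows from Lemma~\ref{LoxoCriterionLem} applied to $h$ at the point $y$ using the computed Gromov products and the large displacement $d(y, hy) = d(y,hx) + d(hx,hy) - 2\langle y, hy\rangle_{hx} \ge$ (large). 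The only genuinely delicate point — and the main obstacle — is bookkeeping the two-periodic structure of $\beta$ carefully enough to check Corollary~\ref{LocalGeodCor}'s hypothesis at \emph{both} types of vertices simultaneously, in particular making sure that the Gromov-product bounds from Lemma~\ref{SmallProductLem} (which are stated at $y$ and at $hx$) translate correctly to \emph{all} vertices of $\beta$ under the $h$-action and that the $C$ there, the $C$ in the $C$-thin quadrilateral convention of Section~\ref{LargeInjSection}, and the constant in Corollary~\ref{LocalGeodCor} are all reconciled into a single $\delta$-dependent constant; everything else is routine once $n_1, m_1$ are pinned down as functions of $\delta$ alone.
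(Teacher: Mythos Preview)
Your approach is the same as the paper's: fix a value of $\theta$ depending only on $\delta$, invoke Lemma~\ref{SmallProductLem} to bound the two Gromov products at $y$ and $hx$ by $C$, and then feed the two-periodic sequence $\ldots, x, y, hx, hy, h^2x,\ldots$ into Corollary~\ref{LocalGeodCor}. The paper does exactly this.

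There is one genuine quantitative slip. You write ``choosing $n_1$ large enough\ldots so that both separations exceed, say, $16(C+\delta)$'', but the separation $d(y,hx)=d(b^{-m_1}o,\,fb^{m_1}o)$ does \emph{not} depend on $n$; enlarging $n_1$ only helps with $d(x,y)$. Your choice $\theta=2$ then gives only $d(y,hx)\ge 2\,d(o,bo)\ge 2C$, which is not enough for the hypothesis $2C\le d(y,hx)/4-\delta$ of Corollary~\ref{LocalGeodCor}. The paper avoids this by taking
\[
\theta:=\max\Bigl\{1,\ \frac{d(o,fo)}{d(o,bo)},\ \frac{10C}{d(o,bo)}\Bigr\},
\]
which still depends only on $\delta$ (the last term is at most $10$ by~\eqref{distboEQ}, and the middle term is bounded since $d(o,fo)\le \ell_o(S)$ while $d(o,bo)\ge \ell_X(S)-C_0$), and which forces $d(y,hx)\ge \theta\,d(o,bo)\ge 10C$. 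With this single adjustment your argument goes through verbatim and coincides with the paper's proof.
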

\begin{proof}
The inequality (\ref{ndistboEQ}) implies that the following   constant $\theta$ depends only on $\delta$: $$\theta:=\max\left\{1, \frac{d(o,fo)}{d(o,bo)}, \frac{10C}{d(o,bo)}\right\}.$$ 
If $m_1:=m(\delta, \theta)$ is given by Lemma \ref{SmallProductLem}, then for $n\ge m_1$,  we have $$\max\{\langle x,hx\rangle_{y}, \langle y, hy\rangle_x \}\le C.$$

Note that $d(x,y)=d(o, b^{n-2m_1}o)\ge 10C$ by  Eq. (\ref{n1DefnEQ}) and $d(y, hx)\ge \theta d(o,bo) \ge 10C$. Therefore,
$$
\langle x,hx\rangle_{y}+ \langle y, hy\rangle_{hx}\le \frac{1}{4} d(y, hx) -\delta.
$$
so the assumption of  Corollary \ref{LocalGeodCor} is verified for the sequence of points $$\cdots, h^{-i}x,h^{-i}y,\cdots, x, y, hx, hy, \cdots, h^j x, h^j y,\cdots$$ Hence, there exist $\lambda=\lambda(\delta), c=c(\delta)>0$ such that $\beta$ is a $(\lambda, c)$-quasi-geodesic. This proves that  $h$ is loxodromic.
\end{proof}

\subsection{Large injectivity} 
The crucial property in   constructing free subgroups is the following property of a loxodromic isometry $h=fb^n$. Recall that the point $o\in X$ is provided by Lemma \ref{ShortHypLem} so that the inequality (\ref{distboEQ}) holds.
\begin{defn}\label{LoxLargeInjDef}
A loxodromic element $h$ has  \textit{injective radius} $L>0$ if $E(h)$  contains a finite subgroup $F$ with $[F: E^\star(h)]\le 2$   so that $E=\langle h\rangle F$ and for any $g\in E(h)\setminus F$, we have $\ell_X(g)>L\cdot d(o,bo)$.
\end{defn}
 
Let $N_0>0$ be given by Lemma \ref{UniFiniteBoundLem} so that $\sharp E^\star(h)\le N_0$, and $D=D(\lambda, c, \delta)$  be       given by Lemma \ref{ShortEllipticLem}.
\begin{lem}\label{TwoPolesLem}
For any $L>0$ there exists $n_2=n_2(L,\delta)\ge n_1$  such that the loxodromic  element $h=fb^n$ for $n\ge n_2$ has injective radius $L$. Precisely,   
\begin{enumerate}
    \item 
    $\sharp F \le 2 N_0$ and $\ell_z(F)\le 2D$ for any $z\in \beta$.
    \item 
    For any $g\in E(h)\setminus F$, we have $\ell_X(g)>L\cdot d(o,bo).$
    \item
    for any $g\in  E(h)\setminus F$, there exist    $i\in \mathbb Z$ and $t\in F$  such that $g=h^i t$.
\end{enumerate}  
\end{lem}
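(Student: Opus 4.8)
The plan is to realize the three items of Lemma~\ref{TwoPolesLem} as successive consequences of the geometric control on $\beta$ obtained in Lemma~\ref{HighPowerElemLem}, of the structure theory of elementary subgroups (Lemma~\ref{elementarygroup}), and of the bi-infinite bounded-intersection statement for $b$ in Lemma~\ref{LinearBILem}. Throughout, $h=fb^n$ with $\beta$ the $(\lambda,c)$-quasi-axis of (\ref{BetaPathEQ}), and we will choose $n$ large in terms of $L$ (and of $\delta$, $N_0$, $D$).

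\textbf{Step 1 (the finite subgroup $F$).} I would set $F$ to be the preimage in $E(h)$ of the index-$\le 2$ subgroup $E^+(h)/E^\star(h)\cap\{\pm 1\}$; concretely, take $E^\star(h)\le F\le E(h)$ with $[F:E^\star(h)]\le 2$, $F$ finite, $E(h)=\langle h\rangle F$, following Lemma~\ref{elementarygroup}(3)–(4) exactly as in \cite[Lemma 4]{AL06}. Then $\sharp F\le 2\sharp E^\star(h)\le 2N_0$ by Lemma~\ref{UniFiniteBoundLem}. For the displacement bound on $\beta$: every $g\in E^\star(h)$ is elliptic and preserves the orientation of the quasi-axis $\beta$, so Lemma~\ref{ShortEllipticLem} applied to $h$ with its $(\lambda,c)$-quasi-axis gives $d(gz,z)\le D$ for $z\in\beta$. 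An element of $F\setminus E^\star(h)$ is a product of an element of $E^\star(h)$ with a fixed coset representative $t_0$; choosing $t_0$ to move a basepoint of $\beta$ by at most $D$ (again Lemma~\ref{ShortEllipticLem}, noting $t_0^2\in E^\star(h)$ by Lemma~\ref{elementarygroup}(4)) yields $\ell_z(F)\le 2D$ for all $z\in\beta$ after the triangle inequality. This is item (1).

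\textbf{Step 2 (large translation length outside $F$ — the main obstacle).} This is the heart of the lemma. Let $g\in E(h)\setminus F$. By Lemma~\ref{elementarygroup}(1), $[E(h):\langle h\rangle]<\infty$, so $g=h^i t$ for some $i\in\mathbb Z$ and $t$ ranging over a finite transversal; the point is to show that if $g\notin F$ then necessarily $i\ne 0$ and, in fact, $|i|$ forces $\ell_X(g)$ to be large. Here is where I expect to work hardest. The translation length of $h$ itself is comparable to $d(o,hfo)$ up to additive error (Lemma~\ref{LoxoCriterion2Lem}(3) / Lemma~\ref{QuasiAxisLengthLem}), and this is at least of order $n\cdot d(o,bo)$ because the quasi-axis $\beta$ contains the subsegment $[x,y]_\alpha$ of $\alpha$-length $\asymp (n-2m_1)d(o,bo)$. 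So for any $g=h^it$ with $i\ne 0$ we get $\ell_X(g)=\ell_X(h^i t)\ge |i|\cdot\tau(h)-(\text{bounded})\gtrsim |i|\cdot n\cdot d(o,bo)-O_\delta(1)$, using that $t$ has bounded displacement on $\beta$ by Step 1 and Lemma~\ref{QuasiAxisLengthLem}. Thus choosing $n=n_2(L,\delta)$ large enough that $\tau(h)>2L\cdot d(o,bo)$ (say) makes $\ell_X(g)>L\cdot d(o,bo)$ for every $g$ with $i\ne 0$. The remaining, and genuinely delicate, case is to rule out $g\in E(h)$ of the form $g=h^0 t=t$ with $t\notin F$: one must show $E(h)\cap(\text{the }i=0\text{ coset})=F$, i.e. the torsion-plus-orientation part of $E(h)$ is exactly $F$. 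That is precisely the content of Lemma~\ref{elementarygroup}(3)–(4): the elliptic elements of $E^+(h)$ form $E^\star(h)$, and any $g\in E^-(h)$ has $g^2\in E^\star(h)$, so $E(h)=\langle h\rangle\cdot(E^\star(h)\cup E^\star(h)t_0)=\langle h\rangle F$, and the only elements with trivial ``$h$-exponent'' are those in $F$. I would phrase ``$h$-exponent'' rigorously via the quasi-axis: define it by how far along $\beta$ the element translates, using Lemma~\ref{QuasiAxisLengthLem} to see that $g$ translates $\beta$ by $d(x,gx)$ up to $O_\delta(1)$, and an element translating $\beta$ by a bounded amount must be in $F$ once $\tau(h)$ exceeds that bound — giving item (2), and the decomposition $g=h^it$ with $t\in F$ which is item (3).

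\textbf{Step 3 (assembling item (3)).} With $n\ge n_2$ as above, for $g\in E(h)\setminus F$ write $g=h^it$ from the finite-index decomposition $E(h)=\langle h\rangle F$; Step~2 shows $i\ne 0$ is impossible to avoid only in the sense that $i=0$ forces $g\in F$, so for $g\notin F$ we automatically have the required form $g=h^it$ with $t\in F$, $i\in\mathbb Z$. I would close by noting $n_2\ge n_1$ can be arranged since the constraints on $n$ are all of the form ``$n$ sufficiently large'', and that the only genuinely new input beyond \cite{AL06} is that $d(o,hfo)$ grows linearly in $n$ times the (now variable) quantity $d(o,bo)$ — which is exactly why the injective radius is measured in units of $d(o,bo)$ rather than absolutely. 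The main obstacle, to reiterate, is the clean identification of the ``bounded-translation part'' of $E(h)$ with $F$ uniformly in the generating set $S$; everything else is bookkeeping with the quasi-axis estimates already in hand.
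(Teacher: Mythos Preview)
There is a genuine gap in Step~2, and it propagates to Step~3. You assert that $E(h)=\langle h\rangle F$ follows from Lemma~\ref{elementarygroup}(3)--(4), but it does not: those items only say that $E^+(h)/E^\star(h)\cong\mathbb Z$, not that the image of $h$ \emph{generates} this $\mathbb Z$. A priori $h$ could be a proper power $h_0^k$ modulo $E^\star(h)$ with $k\ge 2$, and then $h_0\in E^+(h)\setminus\langle h\rangle E^\star(h)$, so both the decomposition in item~(3) and the very definition of injective radius fail. The same obstruction breaks your argument for item~(2): for $g=h_0$ one has $\tau(g)=\tau(h)/k$, and since $d(o,bo)$ depends on the generating set $S$ and is unbounded over all $S$, no soft bound on $k$ is available, so you cannot conclude $\ell_X(g)>L\cdot d(o,bo)$. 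Your ``$h$-exponent via translation along $\beta$'' does not rescue this, because such an $h_0$ translates $\beta$ by an amount that is neither bounded nor an integer multiple of $\tau(h)$.

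This is precisely where Lemma~\ref{LinearBILem} enters --- you list it in your plan but never use it. The paper exploits that the quasi-axis $\beta$ is assembled from long subsegments $[x,y]_\alpha$ of the $b$-axis $\alpha$, separated by short junctions $[y,hx]$. For $g\in E^+(h)$ one has $d_H(\beta,g\beta)\le R=R(\delta)$; once $n$ is large enough that $d(x,y)$ exceeds both the junction length and the bound $N\cdot d(o,bo)$ of Lemma~\ref{LinearBILem}, a long piece of $g\alpha$ must fellow-travel some $h^i\alpha$, forcing $g^{-1}h^i\in E(b)$. Since $g^{-1}h^i\in E(h)$ as well and $E(h)\cap E(b)$ is finite (otherwise $h$, hence $f$, would lie in $E(b)$), this element is torsion in $E^+(h)$, i.e.\ lies in $E^\star(h)$. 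That is the primitivity statement $E^+(h)=\langle h\rangle E^\star(h)$, and items~(2) and~(3) then follow from the displacement estimate $d(y,h^iy)>2D+L\cdot d(o,bo)$ for $i\ne 0$. A secondary issue: in Step~1 you apply Lemma~\ref{ShortEllipticLem} to an orientation-reversing $t_0\in E^-(h)$, but that lemma is stated only for $E^\star(h)\subset E^+(h)$; the paper instead sets $F=E^\star(h)$ if every $r\in E^-(h)$ satisfies $d(y,ry)>D$, and otherwise $F=\langle E^\star(h),r\rangle$ for one fixed $r$ with $d(y,ry)\le D$.
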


\begin{proof}
We keep the same notation as in the   proofs of Lemma \ref{SmallProductLem} and Lemma \ref{HighPowerElemLem}. For any $h=fb^n$ with $n\ge n_1$, the path $\beta$ in   (\ref{BetaPathEQ}) is a $(\lambda,c)$-quasi-geodesic, where the constants $n_1, \lambda, c>0$ depend only on $\delta$. 

Let $m_1=m_1(\delta)$ given by Lemma \ref{HighPowerElemLem}. Denote $x=b^{m_1-n}o, y=b^{-m_1}o$. Then 
\begin{equation}\label{CpDistEQ}
\begin{array}{rl}
d(y, hx)&\le d(o,fo)+2d(o, b^{m_1}o), \\
d(x, y)&=d(o, b^{n-2m_1}o).    
\end{array}   
\end{equation}

By Lemma \ref{QuasiAxisLengthLem}, it suffices to prove the statement  (2) by placing the basepoint  $z$ to the point $y=b^{-m_1}o$ at $\beta$. Since $\beta$  is a $(\lambda,c)$-quasi-geodesic, by increasing  $n_1=n_1(\delta)$, we can assume  
\begin{equation}\label{LargeGapEQ}
\forall i\ne 0\in \mathbb Z,\; d(y, h^iy)> 2D+L\cdot d(o,bo).
\end{equation}

We first consider  elements $g\in E^+(h)$ and prove the corresponding statements (2-3).

Denote $\beta_0=[x,y]_\alpha[y,hx]$  the fundamental domain for the action of $\langle h \rangle$ on $\beta$. By hyperbolicity,   the finite Hausdorff distance $d_H(\beta,  g\beta)<\infty$ implies  a uniform constant $R=R(\delta)>0$ so that $d_H(\beta,  g\beta)\le R$.   

By (\ref{distboEQ}), the constant $\theta$ defined as follows depends on $\delta$ only: $$\theta:=\displaystyle \frac{R}{d(o,bo)}\le R/C.$$    Let $N=N(\lambda,c,\delta, \theta)$ be given by Lemma \ref{LinearBILem}.

By Eq. (\ref{CpDistEQ}), the least integer $n_2\ge \max\{N, n_1\}$  such that for any $h=fb^n$ with $n>n_2$, 
\begin{equation}\label{LongdistxyEQ}
d(x, y) \ge   d(y, hx)+2n_2d(o,bo)    
\end{equation}
depends only on $\delta$.
 
Note that $\beta$ is contained in the union   $\bigcup_{i\in \mathbb Z} h^i \alpha$ and $\bigcup_{i\in \mathbb Z} h^i [y, hx]$. By (\ref{LongdistxyEQ}), the path $g[x,y]_\alpha$ contains a subpath $\alpha_0$ of diameter at least $n_2d(o,bo)$ which is contained in  the $R$-neighborhood $h^i\alpha$. Then there exist  a subpath $\alpha_1$ of $\alpha$  such that  $d_H(g\alpha_0, h^i\alpha_1)\le R$.  Since $n_2>N$, Lemma \ref{LinearBILem} implies $g^{-1} h^i\in E(b)$. 

We claim that $t:=g^{-1}h^i\in E(h)\cap  E(b)$ is of finite order. If not,  then $E(h)\cap E(b)$ is an infinite subgroup. Thus, $E(h)\cap E(b)$ act co-compactly on   the quasi-axis of both $h$ and $b$, so the $(\lambda,c)$-quasi-axis  of $b$ is preserved by $h$ up to finite Hausdorff distance. Hence, we obtain $h \in E(b)$ and then $f\in E(b)$. This is a contradiction.

Therefore, the finite order element $t\in E(b)$ preserves a $(\lambda, c)$-quasi-axis $\beta$ of $h$.  By Lemma \ref{ShortEllipticLem}, we have {$d(z, tz)<D$}. So far, we have verified the assertions (2-3) for $g\in E^+(h)$.

To complete the proof, it remains to consider elements $g\in E^-(h)$. If $d(y,gy)>D$ for all $g\in E^-(h)$, then we are done by setting $F:=E^\star(h)$ and $\ell_X(F)\le D$ by Lemma \ref{ShortEllipticLem}. Otherwise, let $r\in E^-(h)$ so that $d(y,ry)\le D$. Thus,  $F:=\langle E^\star(h), r\rangle$ has order at most $2N_0$ and $\ell_y(F)\le \ell_y(E^\star(h))+d(y, ry)\le 2D$.  

Since $E^+(h)$ is of index $2$ in $E(h)$, we write $g=h^i t v$ for some $t\in E^\star (h)$.   If $i\ne 0$, one deduce from (\ref{LargeGapEQ}) that
$$
d(y, gy)\ge d(y, h^iy)-d(y, ty)-d(y, ry) \ge Ld(o,bo). 
$$
The result  is proved. \end{proof}

\section{Proof of Theorem \ref{mainthm2}}\label{MainProofSection}
 
We resume the constants $\lambda_0, c_0, C_0, C$ depending only on $\delta$ at in Section \ref{LargeInjSection} and the results obtained there under the assumption $\ell_X(S)>\max\{28\delta,2C\}$. Then $b\in S^{\le 2}$ is a loxodromic element given by Lemma \ref{ShortHypLem}, and $f\in S\setminus E(b)$ exists due to the fact that $H$ is a non-elementary subgroup.

Let $m_1=m_1(\delta)\ge 2$   given by Lemma \ref{HighPowerElemLem} and make the reference point at $y=b^{-m_1}o$ on the quasi-axis $\beta$ in (\ref{BetaPathEQ}). Note that $d(o,bo)=d(y,by)$. 

Set   $L:=4(m_1+1)\ge 10$. By (\ref{distboEQ}), we have $d(o,so)\le C_0+d(o,bo)$  for any $s\in S$, and thus 
\begin{equation}\label{SmallsDistEQ1}
d(y, sy)\le (2m_1+1)d(o,bo)+C_0< \frac{L}{2}d(y,by)   
\end{equation}
which yields for  any $c:=s^{-1}s'$ with $s\ne s'\in S$,
\begin{equation}\label{SmallsDistEQ}
d(y, cy)< L\cdot d(y, by)=L\cdot d(o,bo).
\end{equation}

Let  $n_2=n_2(L,\delta)>n_1$ and $F$  be the finite subgroup in $E(h)$    given by Lemma \ref{TwoPolesLem}.  The following result holds for   any integer $n\ge n_2$ and   $h=fb^n$.

\begin{lem}
Choose  a largest subset $S_0$ of $S$ such that $sF \ne   s'F$ for any $s\ne s'$. 
Then  for any $s\ne s'\in S_0$, $s^{-1}s'\notin E(h)$.
\end{lem}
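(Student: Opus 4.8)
The plan is to argue by contradiction: suppose $s^{-1}s' \in E(h)$ for some $s \neq s' \in S_0$. By the choice of $S_0$ as a maximal subset with distinct $F$-cosets, we have $sF \neq s'F$, hence $c := s^{-1}s' \notin F$. So $c \in E(h) \setminus F$, and Lemma~\ref{TwoPolesLem}(2) applies: $\ell_X(c) > L \cdot d(o,bo)$. On the other hand, $c = s^{-1}s'$ with $s, s' \in S$, so the estimate (\ref{SmallsDistEQ}) gives $d(y, cy) < L \cdot d(o,bo)$. Since $\ell_X(c) \le \ell_y(c) = d(y, cy)$ for the reference point $y \in \beta$, we obtain $\ell_X(c) < L \cdot d(o,bo)$, contradicting $\ell_X(c) > L \cdot d(o,bo)$.

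The only point requiring a little care is that the two inequalities are set up consistently at the \emph{same} basepoint $y = b^{-m_1}o$ on $\beta$: the lower bound in Lemma~\ref{TwoPolesLem}(2) was proved by placing the basepoint at $y$ (via Lemma~\ref{QuasiAxisLengthLem}), and the upper bound (\ref{SmallsDistEQ}) is likewise phrased in terms of $d(y, \cdot\, y)$, using $d(o,bo) = d(y,by)$. The constant $L = 4(m_1+1)$ was chosen precisely so that (\ref{SmallsDistEQ1}) — hence (\ref{SmallsDistEQ}) — holds for every element of the form $s^{-1}s'$ with $s, s' \in S$, uniformly in the (unbounded) generating set $S$, using only $d(o,so) \le C_0 + d(o,bo)$ from (\ref{distboEQ}). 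Thus the contradiction is genuine and does not depend on any hidden choice.

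I do not expect a serious obstacle here; the lemma is essentially a bookkeeping consequence of the "large injectivity radius" packaged in Lemma~\ref{TwoPolesLem}. The one thing to double-check is that $n \ge n_2$ (so that Lemma~\ref{TwoPolesLem} is in force with this value of $L$) and that $F$ is the subgroup produced there, so that "distinct $F$-cosets" is the right normalization — which it is, since $E(h)\setminus F$ is exactly the set on which the displacement of $y$ is forced to exceed $L \cdot d(o,bo)$.
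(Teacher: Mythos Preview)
Your proof is correct and matches the paper's own argument essentially step for step: assume $c=s^{-1}s'\in E(h)$, use the displacement bound (\ref{SmallsDistEQ}) together with Lemma~\ref{TwoPolesLem}(2) to force $c\in F$, contradicting $sF\ne s'F$. The only cosmetic difference is the order of the contradiction (you first note $c\notin F$ and then violate the displacement bound, while the paper first forces $c\in F$ and then violates $sF\ne s'F$).
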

\begin{proof} 
By Lemma \ref{TwoPolesLem}, the inequality (\ref{SmallsDistEQ}) implies  that $s^{-1}s'$ must be contained in $F$ so $sF=s'F$. This contradicts to the choice of $S_0$ consisting of different left $F$-coset representatives.  
\end{proof}

Choose the least integer $n_3\ge n_2$ such that the last inequality in (\ref{yhyEQ2}) holds for any $n\ge n_3$: 
\begin{align}\label{yhyEQ}
&d(y, hy)=d(y, fb^ny)=d(b^{m_1}o, fb^{n+m_1}o)\\\label{yhyEQ2}
\ge& d(o, b^no)-2d(o, b^{m_1}o)-d(o,fo)> Ld(o,bo).    
\end{align}

\paragraph{\textbf{Construct the free bases}} 
Let us now fix   $h=fb^{n_3}$ throughout the proof. Let $F$ be the finite subgroup in $E(h)$   by Lemma \ref{TwoPolesLem}. Let $S_0$ be a largest subset of $S$ such that $sF \ne   s'F$ for any $s\ne s'$.

For $\theta=1$, let $m_2=m(1,\delta), k=n(1,\delta)$  given by Lemma \ref{SmallProductLem}.

We define the free base as follows: $$T=\{th^kt^{-1}: t\in S_0\}.$$ 
If set $\kappa:=2+k(n_3+1)$, then  $d_S(1, th^kt^{-1})\le \kappa$ and $T\subset S^{\le \kappa}$.

The goal is the following.
\begin{lem}
The set $T$ generates a free subgroup of rank $\sharp T$ in $H$.
\end{lem}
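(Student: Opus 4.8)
The plan is to use the standard ping-pong (table-tennis) lemma applied to the action of $H$ on $X$, where the players are the conjugates $t h^k t^{-1}$ for $t \in S_0$. First I would set up the geometry: since $h = fb^{n_3}$ is loxodromic with a $(\lambda,c)$-quasi-axis $\beta$ (Lemma \ref{HighPowerElemLem}) and has injective radius $L = 4(m_1+1)$ with associated finite subgroup $F$ (Lemma \ref{TwoPolesLem}), and since $S_0$ was chosen so that distinct elements lie in distinct left $F$-cosets, the preceding lemma gives $s^{-1}s' \notin E(h)$ for $s \neq s' \in S_0$. Applying Lemma \ref{SmallProductLem} with $\theta = 1$ (using $m_2, k$ there) shows that high powers $h^k$ of $h$ still admit a quasi-axis obtained by the same truncation construction, so $h^k$ is loxodromic with quasi-axis $\beta$ and with translation length at least a definite multiple of $d(o,bo)$.

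The key step is to produce, for each $t \in S_0$, an attracting/repelling "ball pair" in $X \cup \partial X$ (or, working combinatorially, disjoint neighborhoods of $\beta$ of the form $N_R(t\beta)$) such that $(t h^k t^{-1})^{\pm m}$ maps the complement of the $t$-neighborhood into the $t$-neighborhood for all $m \neq 0$. Concretely: the element $t h^k t^{-1}$ is loxodromic with quasi-axis $t\beta$; by Lemma \ref{LinearBILem}, for $s \neq s' \in S_0$ the quasi-axes $s\beta$ and $s'\beta$ fellow-travel only along a segment of length $\le N \cdot d(o,bo)$ (since $s^{-1}s' \notin E(b)$ — here one must also check $s^{-1}s' \notin E(b)$, which follows because $E(h) \cap E(b)$ is finite as shown in the proof of Lemma \ref{TwoPolesLem}, so a bounded-coset argument reduces the $E(h)$-statement to an $E(b)$-statement). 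Then I would choose the separation radius $R$ and the power $k$ so that $\ell_X(h^k) > 2R + N\cdot d(o,bo) + (\text{the fellow-travelling and Morse constants})$; inequality (\ref{yhyEQ2}) together with $L = 4(m_1+1)$ is exactly engineered to make such a choice possible. With $R$ fixed, define $B_t := N_R(t\beta) \setminus N_R(t'\beta \text{ for } t'\neq t)$ — or more cleanly, use the projection to $t\beta$: a point far from $t\beta$ is moved by a large power of $t h^k t^{-1}$ to a point within bounded distance of $t\beta$, hence away from every $t'\beta$.

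I would then verify the ping-pong hypothesis: pick a basepoint $p$ (say $p = y$, or any point far from all the $t\beta$'s, which exists since the axes are distinct geodesic-like sets); show $p \notin \bigcup_t B_t$ after possibly enlarging $R$; and show $(th^kt^{-1})^m (X \setminus B_t) \subset B_t$ for all $m \neq 0$. The last inclusion is where the quantitative input is used: a point $q \notin B_t$ projects to $t\beta$ within bounded distance of... no — more carefully, write $q' = t^{-1}q$; then $q'$ has bounded projection-overlap with $\beta$ away from the "middle" only if $q$ came from some $t'\beta$, and $h^{km}$ translates along $\beta$ by more than $2R$ plus all relevant constants, pushing $q'$ into $N_R(\beta)$ with its projection near one of the two ends of the fundamental domain; translating back by $t$ lands in $N_R(t\beta)$, and the estimate $\ell_X(h^k) > 2R + N d(o,bo) + \cdots$ guarantees this image avoids $N_R(t'\beta)$ for $t' \neq t$. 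The classical ping-pong lemma then yields that $T = \{t h^k t^{-1} : t \in S_0\}$ freely generates a free subgroup of rank $\sharp T = \sharp S_0$, and since $\sharp S_0 \ge \sharp S / \sharp F \ge \sharp S/(2N_0)$, this is the output needed for Theorem \ref{mainthm2}.

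\textbf{Main obstacle.} The delicate point is the bookkeeping of constants: one needs the power $k$ (equivalently the translation length $\ell_X(h^k)$, which grows with $n_3$ and $k$) to dominate simultaneously the Morse constant for $(\lambda,c)$-quasi-geodesics, the fellow-travelling bound $N d(o,bo)$ from Lemma \ref{LinearBILem}, the separation radius $R$, and the displacement of $p$ — all while $R$ itself must be large enough to separate the finitely many distinct axes $\{t\beta\}_{t\in S_0}$ but the relevant Lemma \ref{LinearBILem} bound is proportional to $d(o,bo)$, which is the same scale as $\ell_X(h^k)/\text{(const)}$. Making these inequalities consistent is exactly the role of the choice $L = 4(m_1+1)$ and the passage to $h^k$; I expect the proof to proceed by first fixing $R = R(\delta)$ from the Morse/separation requirement, then choosing $k$ (hence enlarging $n_3$ if needed) last so that $\ell_X(h^k)$ beats everything.
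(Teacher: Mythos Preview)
Your approach is genuinely different from the paper's. The paper does \emph{not} use ping-pong sets at all; instead it runs a direct ``broken geodesic'' argument: given a reduced word $W = (s_1 h^{i_1k} s_1^{-1})\cdots(s_l h^{i_lk} s_l^{-1})$, it rewrites $W = s_1(h^{i_1k} c_1 h^{i_2k} c_2 \cdots c_{l-1} h^{i_lk})s_l^{-1}$ with $c_j = s_j^{-1}s_{j+1}$, concatenates the corresponding pieces of $\beta$ joined by short geodesics $[y, c_j y]$, and then applies Lemma~\ref{SmallProductLem} a second time---now with $h$ playing the role of $b$, $\beta$ the role of $\alpha$, $y$ the role of $o$, and each $c_j$ the role of $f$---to truncate quadrilaterals and get small Gromov products at the junction points. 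Corollary~\ref{LocalGeodCor} then gives $d(z_1,w_l) \ge \frac{1}{2}\sum d(z_j,w_j)$, hence $W \ne 1$. This is why the paper sets $\theta = 1$ and defines $m_2 = m(1,\delta)$, $k = n(1,\delta)$: the hypothesis $\theta\, d(y,hy) \ge d(y,c_j y)$ of Lemma~\ref{SmallProductLem} is exactly inequality~(\ref{yhyEQ2}) versus~(\ref{SmallsDistEQ}). The advantage is that Lemma~\ref{SmallProductLem} is reused verbatim with no new disjointness verification, and the constants fall out automatically.

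Your ping-pong sketch is plausible in principle but has two issues worth flagging. First, a small confusion: to bound the fellow-travelling of $t\beta$ and $t'\beta$ you need $t^{-1}t' \notin E(h)$ (which is precisely the preceding lemma), not $\notin E(b)$; and Lemma~\ref{LinearBILem} applied to $\beta$ gives a bound $N\cdot d(y,hy)$, not $N\cdot d(o,bo)$---the scales differ by a factor of roughly $n_3$. Second, and more substantively, your candidate ping-pong sets $B_t = N_R(t\beta)$ are \emph{not} pairwise disjoint: all the axes $t\beta$ pass within $\frac{L}{2}d(y,by)$ of the common point $y$ (by~(\ref{SmallsDistEQ1})), so the tubes overlap near $y$ regardless of $R$. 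To salvage ping-pong you would need half-space--type sets, e.g.\ $A_t^{\pm} = \{p: \langle p, t h^{\pm\infty}\rangle_y > M\,d(y,hy)\}$, and then prove pairwise disjointness from the uniform fellow-travelling bound; this can be done, but it is exactly the bookkeeping you defer in your ``main obstacle'' paragraph, and it is more work than the paper's route. The local-to-global argument sidesteps the disjointness issue entirely because it never needs globally defined sets---only local Gromov-product estimates at each junction.
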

\begin{proof}
Let $W$ be a non-empty reduced word over $T\cup T^{-1}$ written as follows
\begin{align*}
W&=(s_1\cdot h^{i_1k}\cdot s_1^{-1})( s_2 \cdot h^{i_2k}\cdot s_2^{-1})\cdots ( s_l \cdot h^{i_lk}\cdot s_l^{-1}) \\
&=s_1\cdot \left( h^{i_1k} \cdot c_1\cdot  h^{i_2k} \cdot c_2  \cdots \cdot c_{l-1} h^{i_lk}\right)\cdot s_l^{-1}
\end{align*}

First of all, let $\beta_j$  be the 
subpath of $\beta$ starting from $y$ to $h^{i_jk}y$ consisting of $
i_j\cdot k$ copies of $[y, hy]_\beta$. Let $p_j=[y, c_jy]$ be a geodesic  labeled by $c_j$.

We choose $z_j, w_j$ on $\beta_j$ so that the initial subpath of $\beta_j$ until $z_j$ contains exactly $m_2$ copies of $[y, hy]_\beta$, and the terminal path starting at $w_j$  contains exactly $m_2$ copies of $[y, hy]_\beta$.  To be precise, set $z_j=h^{m_2k}y, w_j=h^{i_j-m_2}y$.

Furthermore if $j=1$, we let $z_1$ be the initial point of $\beta_1$; if   $j=l$, let $w_l$ be the initial point of $\beta_l$.

We now properly translate $\beta_j$ and $p_j$ for $1\le j\le l$ so that $\beta_1$ originates at $y$, and then the terminal points of $\beta_j$ followed by the initial points of $p_j$ in a manner produces the following concatenated  path: $$\gamma=\beta_1\cdot p_1 \cdot\beta_2\cdot p_2\cdot \beta_3 \cdots p_{l-1}\cdot \beta_l.$$ 
(We refer the reader to Figure \ref{figure1} for similar illustration of cutting out quadrilaterals, where $x, y, hx, hy, h^2x, h^2y$ should be marked as $z_1,w_1, z_2, w_2,z_3, w_3$ etc.)

By abuse of language, the translated points of $z_j,w_j$ on $\beta_j$ are still denoted by $z_j,w_j$, so we have plotted  a sequence of points $z_1, w_1, z_2, w_2, \cdots, z_l, w_l$ on $\gamma$. By the choice of $z_1, w_l$, the path $\gamma$ starts at $z_1$ and ends at $w_l$, labeled by the word $s_1^{-1} Ws_l$.

The key construction is then to cut  quadrilaterals off $\gamma$ along $[w_j, z_{j+1}]$ and verify that $\{z_1, w_1, z_2, w_2, \cdots, z_l, w_l\}$  is a quasi-geodesic.

To truncate the quadrilaterals, we apply Lemma \ref{SmallProductLem} to $\beta_j, c_j, \beta_{j+1}, c_{j+1}$ in order for $1\le j\le l$. For concreteness, set $j=1$.

Noting $d(y,hy)>Ld(y,cy)$ by (\ref{yhyEQ2}) and (\ref{SmallsDistEQ}), Lemma \ref{SmallProductLem} gives $$\langle z_1, z_2\rangle_{w_2}, \langle w_1,  w_2\rangle_{z_2} \le C$$ and together with Lemma \ref{TwoPolesLem}.(2), $$d(w_1,  z_2)\ge   \theta d(y,hy)\ge Ld(y, by).$$
By the inequality (\ref{distboEQ}) that $d(y, by)\ge C\ge \delta$,  we  thus derive 
\begin{align}\label{ShortProdEQ1}
\langle z_1, z_2\rangle_{w_2}, \langle w_1,  w_2\rangle_{z_2} \le d(w_1, z_2)/4-\delta    
\end{align}

Similarly, since $d(z_2,w_2)=d(y, h^{i_2k-2m_2}y)\ge  Ld(y, by) \ge 10C$, we have 
\begin{align}\label{ShortProdEQ2}
\langle w_1, w_2\rangle_{z_2}, \langle z_2,  z_3\rangle_{w_2} \le d(z_2, w_2)/4-\delta
\end{align}

In conclusion,  the inequalities (\ref{ShortProdEQ1}) and (\ref{ShortProdEQ2}) verifying the assumption of Corollary \ref{LocalGeodCor} hold for every four consecutive points in $z_1, w_1, z_2, w_2, \cdots, z_l, w_l$. Thus,  $$
d(z_1, w_l)\ge \frac{1}{2} \sum_{1\le j \le l} d(z_j, w_j) \ge Ld(y,by)
$$ 
By (\ref{SmallsDistEQ1}), we have $d(y, s_1y)+d(y, s_ly)< Ld(y,by)$. 
Thus, $d(o, Wo) =d(z_1,w_1)-d(y, s_1y)-d(y, s_ly)>0$. Hence, any non-empty reduced word $W$ is mapped a non-trivial isometry, so $T$ generates a free subgroup of rank $\sharp T$.  
\end{proof}

We now finish the proof of Theorem \ref{mainthm2}. Summarizing the above discussion, for each generating set $S$ of $H$, we constructed a finite set $T\subset S^{\le \kappa}$ satisfying $$\sharp T \ge \frac{1}{2N_0}\sharp S$$ 
so that $\langle T\rangle $ is a free group of rank $\sharp T$. Thus, $$\sharp (S^{\le n\kappa})\ge  (2\sharp T-1)^n\ge (\frac{\sharp S-N_0}{N_0})^n$$ and there exists $c_0>0$ such that $\omega(H,S)\ge c_0$ for any finite symmetric set $S$. 

Choose the least integer  $M=M(N_0)>0$ such that    $\sharp S/N_0\ge 1+\sqrt{\sharp S}$ for any $\log\sharp S>M$. In this case, we thus obtain $$\omega(H, S)\ge    \frac{1}{2\kappa}{\log (\sharp S)}.$$
Otherwise, $\log\sharp S\le M$, we have $$\omega(H, S)\ge c_0\ge \frac{c_0}{M}{\log (\sharp S)}.$$ The proof of Theorem \ref{mainthm} is finished.

\bibliographystyle{amsplain}   
\bibliography{bibliography}
\end{document}